\newcommand{\cc}[1]{\mathcal{#1}}  
\newcommand{\csf}[1]{\textsf{#1}}  
\definecolor{wisteria}{RGB}{142, 68, 173}  
\definecolor{asbestos}{RGB}{149, 165, 166}  
\definecolor{vanadyl}{RGB}{0, 151, 230}  
\definecolor{nephritis}{RGB}{39, 174, 96}  
\newcommand{\userpar}[1]{\paragraph{#1}}  
\newcommand{\problemtitle}[1]{\gdef\@problemtitle{#1}}
\newcommand{\probleminput}[1]{\gdef\@probleminput{#1}}
\newcommand{\problemquestion}[1]{\gdef\@problemquestion{#1}}
	\par\addvspace{.5\baselineskip}
	\par\addvspace{.5\baselineskip}}
	\par\addvspace{.5\baselineskip}
	\par\addvspace{.5\baselineskip}}
\theoremstyle{plain}  
\declaretheorem[name=Theorem]{theorem}
\declaretheorem[name=Lemma]{lemma}
\declaretheorem[name=Proposition]{proposition}
\declaretheorem[name=Corollary]{corollary}
\theoremstyle{definition}  
\declaretheorem[name=Observation]{observation}
\declaretheorem[name=Remark]{remark}
\declaretheorem[name=Open Problem]{problem}
\newcommand{\st}{:}  
\newcommand{\card}[1]{\vert #1 \vert}  
\newcommand{\pow}[1]{\mathbf{2}^{#1}}  
\newcommand{\cs}{\cc{C}}  
\newcommand{\cl}{h}  
\newcommand{\NP}{\csf{NP}}  
\newcommand{\comp}[1]{\overline{#1}}  
\newcommand{\eq}{\equiv_{AB}}  
\renewcommand{\H}{\cc{H}}  
\newcommand{\E}{\cc{E}}  
\DeclareMathOperator{\mfs}{MFS}  
\def\FIGLinkage{1.0}  
\def\FIGPreSatOne{1.0}  
\def\FIGPreSatTwo{1.0}  
\def\FIGSaturation{0.8}  
\def\FIGSatNotSuf{0.9}  
\def\FIGAlternative{0.75}  
\def\FIGClique{0.9}  
\def\FIGCobipNotSuf{0.8}  
\def\FIGEqClasses{0.8}  
\def\FIGEqGraph{0.95}  
\def\FIGEqGraphSep{0.9}  
\title{Half-space separation in monophonic convexity}
\author[1]{Mohammed Elaroussi}
\author[2]{Lhouari Nourine}
\author[3]{Simon Vilmin}
\affil[1]{Universit\'e de 
Bejaia, Facult\'e des Sciences Exactes,  Unit\'e de Recherche LaMOS, 06000 Bejaia, Algeria}
\affil[2]{Universit\'e Clermont-Auvergne, CNRS, Mines de Saint-\'Etienne, 
	Clermont-Auvergne-INP, LIMOS, 63000 Clermont-Ferrand, France.}
\affil[3]{Aix-Marseille Université, CNRS, LIS, Marseille, France.}
\begin{document}
\maketitle	

\begin{abstract} 
We study half-space separation in the convexity of chordless paths of a graph, i.e., monophonic convexity.
In this problem, one is given a graph and two (disjoint) subsets of vertices and asks whether these two sets can be separated by complementary convex sets, called half-spaces.
While it is known this problem is $\NP$-complete for geodesic convexity---the convexity of shortest paths---we show that it can be solved in polynomial time for monophonic convexity.

\vspace{0.5em}
\noindent\textbf{Keywords:} chordless paths, monophonic convexity, separation, half-space.
\end{abstract}

\section{Introduction}
\label{sec:introduction}

A (finite) convexity space is a pair $(V, \cs)$ where $V$ is a (finite) groundset and $\cs$ a collection of subsets of $V$, called \emph{convex sets}, containing $\emptyset$, $V$ and closed under taking intersections.
Graphs provide a wide variety of different convexity notions, known as graph convexities.
These are usually defined based on paths and include for instance the geodesic convexity \cite{pelayo2013geodesic}, the monophonic convexity \cite{dourado2010complexity, duchet1988convex, farber1986convexity}, the $m^3$-convexity \cite{dragan1999convexity}, the triangle-path convexity \cite{changat1999triangle}, the toll convexity \cite{alcon2015toll}, or the weakly toll convexity \cite{dourado2022weakly}.

In this paper, we are interested in the \emph{half-space separation} problem: with an implicitly given convexity space $(V, \cs)$ and two (convex) subsets $A, B$ of $V$, are there complementary convex sets $H, \comp{H}$---the so-called \emph{half-spaces}---such that $A \subseteq H$ and $B \subseteq \comp{H}$?
This problem is a generalization to abstract convexity of the half-space separation problem in $\mathbb{R}^d$, being well-studied in machine learning \cite{boser1992training, freund1998large, vapnik1998statistical}.
Half-space separation has motivated the study of structural separation properties of convexity spaces.
Among these properties, two have received particular attention, notably within graph convexities (see e.g.~\cite{bandelt1989graphs, chepoi1994separation, ellis1952general, kay1971axiomatic, van1984binary}): the $S_3$ property stating that any convex set $C$ can be separated from any element of $V$ not in $C$; and the $S_4$ or Kakutani property stating that any pair of disjoint convex sets can be separated.
Besides, the study of the half-space separation problem on its own has recently been brought to the context of graph convexities \cite{seiffarth2020maximal, thiessen2021active}.
In particular, Seiffarth et al.~\cite{seiffarth2020maximal} show that half-space separation is $\NP$-complete for geodesic convexity, the convexity induced by the shortest paths of a graph.
To our knowledge though, the complexity status of half-space separation for the other aforementioned graph convexities is still unknown.

In our contribution, we follow this latter line of research and study half-space separation for the monophonic convexity.
Given a graph $G$ with vertices $V(G)$, a set $C \subseteq V(G)$ is monophonically convex if for any two vertices $u, v$ of $C$, all the vertices on a chordless path $u$ and $v$ lie in $C$.
We prove that half-space separation can be decided in polynomial time for monophonic convexity.
More formally, our main theorem reads as follows:

\begin{theorem}[restate=THMseppoly, label=thm:separability-polynomial] 
Given a graph $G$ and two subsets $A, B$ of $V(G)$, whether $A$, $B$ are separated by monophonic half-spaces can be decided in polynomial time.
\end{theorem}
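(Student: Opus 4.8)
The plan is to reduce the problem to a purely structural question about monophonically convex sets and then solve it by a forcing/closure procedure. First I would record the following characterization, which I expect to be the backbone of the whole argument: a set $C \subseteq V(G)$ is monophonically convex if and only if for every connected component $D$ of $G - C$ the neighborhood $N(D)$ is a clique of $G$. The easy direction observes that two non-adjacent vertices of $N(D)$ would be joined by a chordless path through $D$ leaving $C$; the converse follows by tracing a hypothetical chordless path between two vertices of $C$ and using the clique condition to exhibit a chord. A first consequence is that the monophonic hull $\langle S\rangle$ of any set $S$ is computable in polynomial time: starting from $S$, as long as some component $D$ of the complement has two non-adjacent neighbors $u,v$, a shortest $u$--$v$ path with interior in $D$ is chordless, so its interior is forced into every convex superset and may be added; each round adds a vertex, hence the process is polynomial and returns exactly $\langle S\rangle$. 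I may therefore replace $A,B$ by $\langle A\rangle,\langle B\rangle$ and assume from now on that $A,B$ are convex; if they already intersect, no separation exists.

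Second, I would analyze the structure of half-spaces through the same characterization. Writing $H,\comp H$ for the two convex parts, each component of $G[\comp H]$ attaches to $H$ along a clique, and symmetrically. Since a clique is connected, $N(D)$ for a component $D$ of $G[\comp H]$ lies inside a single component of $G[H]$, and conversely; a short meta-graph argument---every part is adjacent to exactly one part on the other side, and $G$ is connected---then forces, for connected $G$, both $H$ and $\comp H$ to be connected. Thus a non-trivial half-space separation of a connected graph is exactly a partition of $V(G)$ into two connected sets, each of whose neighborhood into the other side is a clique; the disconnected case reduces to this one componentwise. This clean target is what the algorithm aims for.

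Third, the algorithm is a monotone forcing procedure. I maintain two disjoint convex sets $H \supseteq A$ and $K \supseteq B$ and repeatedly apply the sound rule: if a still-unassigned vertex $v$ satisfies $\langle K \cup \{v\}\rangle \cap H \neq \emptyset$, then $v$ cannot lie in $\comp H$ and must be added to $H$ (and symmetrically), after which I re-close by taking hulls. If at any point $H \cap K \neq \emptyset$ I report that no separation exists. Each step enlarges a convex set, so there are polynomially many steps, each a polynomial hull computation; the procedure runs in polynomial time, and its forcing steps are clearly necessary for any valid separation.

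The main obstacle is the completion step: when forcing halts with a non-empty set $U$ of unassigned vertices, I must decide whether $U$ can be split so that both final sides are convex, and for connected $G$ also connected, with clique neighborhoods across the cut. Simply dumping $U$ into one side need not work, because convexity of $\comp H$ constrains the components of $G[H]$ while convexity of $H$ constrains the components of $G[\comp H]$, and these conditions interact. Here I would lean on the structural lemma of the second step: the desired cut consists of two cliques $S = N(\comp H)$ and $T = N(H)$, so I search for such a cut directly---guided by the minimal separators of $G$ and the already-forced sets---growing a canonical minimal $\comp H \supseteq K$ that avoids $A$ and has clique neighborhood, and checking that its complement is connected with clique neighborhood. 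Proving that this canonical completion succeeds exactly when a separation exists, and that the relevant candidate cuts are only polynomially many, is the delicate and decisive part of the argument.
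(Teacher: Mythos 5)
Your first two stages are sound and match known facts the paper also relies on: the clique-attachment characterization of monophonically convex sets is Theorem 2.1 of Dourado et al., the polynomial hull computation is their Theorem 4.1, and your forcing rule ``if $\cl(K \cup v) \cap H \neq \emptyset$ then $v$ is forced into $H$'' is exactly the shadow-closing used in the paper's saturation step. But the proposal has a genuine gap precisely where you say the ``delicate and decisive part'' lies: you never actually give the completion argument, and the sketch you offer for it (grow a single canonical minimal $\comp{H} \supseteq K$ with clique neighborhood and test its complement) is not the right shape of answer. After all forcing has stabilized, the residual problem is a global \emph{two-colorability} question, not a question answerable by one canonical candidate cut: the unassigned vertices group into blocks that must each go entirely to one side, certain pairs of blocks are mutually exclusive, and separability holds iff the resulting conflict graph is bipartite and no block is internally conflicting. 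A graph in which three leftover vertices are pairwise ``forbidden'' (each pair hulls into both $A$ and $B$) has no separation even though many individual candidate cuts look locally fine; a single greedy completion cannot certify non-separability in such instances, and when the conflict graph is bipartite but disconnected there is no canonical choice at all. This bipartiteness reduction, together with the lemma that any valid two-coloring of the blocks really yields convex half-spaces (via clique separators), is the core of the paper and is entirely absent from your argument.

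Two further omissions would block you even before the completion step. First, your forcing uses only the shadow rule; the paper additionally forces, for every minimal forbidden set $X$ (a set with $\cl(X)$ meeting both $A$ and $B$), the vertices of $\bigcap_{x \in X}\cl(A \cup x)$ into the $A$-side. Without this second rule the fixed point is strictly weaker, and the structural lemmas needed for the completion analysis (e.g., that all minimal forbidden sets are pairs, and that every vertex of $N(A \cup B)$ sees all of $F(A,B) \cup F(B,A)$) fail. The bound on minimal forbidden sets also requires the Carath\'eodory number $2$ of monophonic convexity, which you never invoke. Second, the paper first \emph{links} $A$ and $B$ by guessing the split point of a shortest $ab$-path and trying all $O(n)$ resulting pairs $\cl(A \cup \{v_1,\dots,v_i\})$, $\cl(B \cup \{v_{i+1},\dots,v_k\})$; adjacency of the two sides is used throughout the structural analysis. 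Your procedure starts forcing from possibly far-apart $A$ and $B$, where these structural properties need not hold.
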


Theorem \ref{thm:separability-polynomial} contrasts with the $\NP$-completeness of half-space separation for geodesic convexity \cite{seiffarth2020maximal} and suggests to study separation in further graph convexities.
Besides, half-space separation also relates to the $p$-partition problem (in graph convexities).
In the $p$-partition problem, one is given a graph $G$ and has to decide whether $V(G)$ can be partitioned into $p$ convex sets, where the meaning of convex depends on the convexity at hand.
For monophonic convexity, Gonzalez et al.~\cite{gonzalez2020covering} show that $p$-partition is $\NP$-complete for $p \geq 3$, but they leave open the case $p = 2$.
Since $2$-partition is possible if and only if there exists two separable vertices, Theorem \ref{thm:separability-polynomial} proves that $2$-partition can be decided in polynomial time.

\begin{remark}
In very recent contributions, Chepoi \cite{chepoi2024separation} and Bressan et al.~\cite{bressan2024efficient} also showed that half-space separation can be decided in polynomial time for monophonic convexity. 
Their results have been obtained independently and using different approaches, even though they share some common points with the technique used in this paper.
\end{remark}

\userpar{Organization of the paper.} 
In Section \ref{sec:preliminaries} we provide definitions, notations and we formally define the problem we investigate in the paper.
In Section \ref{sec:algorithm} we prove Theorem \ref{thm:separability-polynomial} by giving an algorithm which decides whether two sets can be separated by half-spaces.
We conclude the paper in Section \ref{sec:conclusion}.

\section{Preliminaries} \label{sec:preliminaries}

All the objects considered in this paper are finite.
Let $V$ be a set. 
The powerset of $V$ is denoted $\pow{V}$.
Given $X \subseteq V$, we write $\comp{X}$ the complement of $X$ in $V$, i.e., $\comp{X} = V \setminus X$.
Sometimes, we shall write a set $X$ as the concatenation of its elements, e.g., $uv$ instead of $\{u, v\}$.
As a result, $X \cup uv$ and $X \cup v$ stands for $X \cup \{u, v\}$ and $X \cup \{v\}$ respectively.

\userpar{Graphs.}
We assume the reader is familiar with standard graph terminology.
We consider loopless undirected graphs.
Let $G$ be a graph with vertices $V(G)$ and edge set $E(G)$.
A subgraph of $G$ is any graph $H$ such that $V(H) \subseteq V(G)$ and $E(H) \subseteq E(G)$.
The \emph{(open) neighborhood} of a vertex $v$ in $G$ is denoted $N(v)$ and is defined as $N(v) = \{u \in V(G) \st uv \in E(G)\}$.
The \emph{closed neighborhood} of $v$ in $G$ is $N[v] = N(v) \cup v$.
Let $X \subseteq V(G)$.
For $X \subseteq V(G)$, we put similarly $N(X) = \{u \in V(G) \setminus X \st xu \in E(G) \text{ for some } x \in X\}$ and $N[X] = N(X) \cup X$.
The subgraph of $G$ \emph{induced} by $X$ is $G[X] = (X, E(G[X]))$, where $E(G[X]) = \{ uv \in E(G) \st u, v \in X\}$.
If this is clear from the context, we identify $X$ with $G[X]$, and we use $G - X$ to denote $G[V(G) - X]$. 
A \emph{path} in $G$ is a subgraph $P$ of $G$ with $V(P) = \{v_1, \dots, v_k\}$ and such that $v_i v_{i + 1} \in E(P)$ for each $1 \leq i < k$.
Putting $u = v_1$ and $v = v_k$, $P$ is an \emph{$uv$-path} of $G$.
An \emph{induced $uv$-path} or \emph{chordless $uv$-path} of $G$ is an induced subgraph of $G$ being an $uv$-path.
A shortest path is an induced path with the least possible number of vertices.
For simplicity we will identify a path $P$ paths with the sequence $v_1, \dots, v_k$ of its vertices.
Let $A, B \subseteq V(G)$ be non-empty.
The \emph{(inner) frontier} of $A$ with respect to $B$ is $F(A, B) = A \cap N[B]$.
We note that if $A, B$ are disjoint, we obtain $F(A, B) = A \cap N(B)$. 
Remark that for every $X \subseteq V(G)$, $F(\comp{X}, X) = N(X)$.

\userpar{Convexity spaces.}
We refer the reader to \cite{van1993theory} for a thorough introduction to convexity theory.
A \emph{convexity space} is a pair $(V, \cs)$, with $\cs \subseteq \pow{V}$, such that $\emptyset, V \in \cs$ and for every $C_1, C_2 \in \cs$, $C_1 \cap C_2 \in \cs$.
The sets in $\cs$ are \emph{convex} sets.
A convexity space $(V, \cs)$ induces a \emph{(convex) hull} operator $\cl \colon \pow{V} \to \pow{V}$ defined for all $X \subseteq V$ by:
\[ 
\cl(X) = \bigcap \{C \in \cs \st X \subseteq C\}
\]
The operator $\cl$ satisfies, for all $X, Y \subseteq V$: $X \subseteq \cl(X)$; $ \cl(X) \subseteq \cl(Y)$ if $X \subseteq Y$; and $\cl(\cl(X)) = \cl(X)$.
The \emph{Carathéodory number} of $(V, \cs)$ is the least integer $d$ such that for every $X \subseteq V$ and $v \in V$, if $v \in \cl(X)$, there exists a subset $Y$ of $X$ such that $v \in \cl(Y)$ and $\card{Y} \leq d$.
A \emph{half-space} of $(V, \cs)$ is a convex set $H$ whose set complement $\comp{H}$ is convex, that is, $H, \comp{H} \in \cs$.
Let $A, B$ be two subsets of $V$.
We say that $A$ and $B$ are \emph{(half-space) separable} if there exists half-spaces $H, \comp{H}$ satisfying $A \subseteq H$ and $B \subseteq \comp{H}$.
This is equivalent to $\cl(A) \subseteq H$ and $\cl(B) \subseteq \comp{H}$.
The \emph{shadow of $A$ with respect to $B$} \cite{chepoi1986some, chepoi1994separation} is the set $A / B = \{v \in V \st \cl(B \cup v) \cap A \neq \emptyset\}$.
Observe that $A \subseteq A / B$ and $B \subseteq B / A$.

\begin{remark}
Usually, $A / B$ is defined for disjoint sets.
Here, it is more convenient to extend this definition to sets that may intersect.
If $A \cap B \neq \emptyset$, then $A / B = V$ vacuously.
\end{remark}

\userpar{Monophonic convexity.}
We introduce monophonic convexity.
We redirect the reader to \cite{van1993theory, pelayo2013geodesic} for further details on graph and interval convexities.
Let $G$ be a graph, and let $u, v \in V(G)$.
The \emph{monophonic closed interval} of $u, v$ is the set of all vertices that lie on a chordless $uv$-path, denoted by $J[u, v]$.
For $X \subseteq V(G)$, we put $J[X] = \bigcup_{u, v \in X} J[u, v]$.
A set $C$ is \emph{monophonically convex} if $J[C] = C$.
Throughout the paper, if there is no ambiguity, we use the term convex sets as a shortening for monophonically convex sets. 
With $\cs = \{C \subseteq V(G) \st C \text{ is monophonically convex}\}$, the pair $(V(G), \cs)$ is a convexity space, the \emph{monophonic convexity} of $G$.
Its convex hull operator $\cl$ is defined for all $X \subseteq V(G)$ by:
\[ 
\cl(X) = \bigcup_{k = 0}^{\infty} X_k
\]
where $X_0 = X$ and $X_i = J[X_{i - 1}]$ for $i \geq 1$.
We now gather existing results regarding monophonic convexity that will be useful throughout the paper.

\begin{theorem}[\cite{dourado2010complexity}, Theorem 2.1] \label{thm:dourado-convex}
Let $G$ be a connected graph and let $C \subseteq V$.
The set $C$ is convex if and only if for every connected component $S$ of $G - C$, $F_G(C, S)$ is a clique.
\end{theorem}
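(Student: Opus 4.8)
The plan is to prove both implications by contraposition, using throughout the observation that since $S$ is a connected component of $G - C$, the sets $C$ and $S$ are disjoint, so $F_G(C, S) = C \cap N(S)$ is exactly the set of vertices of $C$ having a neighbour in $S$. The common engine of both directions is the elementary fact that among all paths joining two fixed vertices and confined to a prescribed set of internal vertices, a shortest one is chordless: any chord would let us shortcut to a strictly shorter path within the same family. This lets me turn "there is some path" into "there is a chordless path" wherever needed.

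For the forward direction I would assume $C$ is convex and show each frontier is a clique, arguing by contradiction. Suppose some component $S$ contains two non-adjacent vertices $x, y \in F_G(C, S)$. By definition of the frontier, $x$ has a neighbour $x' \in S$ and $y$ has a neighbour $y' \in S$, and since $S$ is connected there is an $x'y'$-path inside $S$. Prepending $x$ and appending $y$ yields an $xy$-path all of whose internal vertices lie in $S$; taking a shortest $xy$-path among those confined in this way produces a chordless $xy$-path $Q$. Because $x$ and $y$ are non-adjacent, $Q$ has at least one internal vertex, which lies in $S \subseteq V(G) \setminus C$. Thus $Q$ is a chordless path between two vertices of $C$ that leaves $C$, contradicting $\cl(C) = C$. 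Hence every frontier is a clique.

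For the converse I would assume every frontier $F_G(C, S)$ is a clique and derive convexity, again by contradiction. If $C$ is not convex there are $u, v \in C$ and a chordless $uv$-path $P = p_0, \dots, p_m$ (with $p_0 = u$, $p_m = v$) passing through some vertex outside $C$. Let $S$ be the component of $G - C$ containing that vertex, let $p_a$ be the last vertex of $P$ lying in $C$ before the excursion, and $p_b$ the first vertex of $P$ lying in $C$ after it; both exist since the endpoints lie in $C$. The intermediate vertices $p_{a+1}, \dots, p_{b-1}$ form a connected subpath of $G - C$ meeting $S$, hence lie entirely in $S$, so both $p_a$ and $p_b$ belong to $F_G(C, S)$. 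The clique hypothesis then forces $p_a p_b \in E(G)$, which is a chord of $P$, contradicting chordlessness. Therefore $C$ is convex.

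I expect the only delicate points to be bookkeeping rather than conceptual. In the forward direction it is the justification that a minimal-length excursion through $S$ is genuinely chordless (the shortest-path argument above). In the converse the main point to check is that $a + 1 \le b - 1$, i.e.\ there is at least one vertex of $S$ strictly between $p_a$ and $p_b$: this is what guarantees that $p_a$ and $p_b$ are distinct and non-consecutive on $P$, so that the edge handed to us by the clique assumption is really a forbidden chord rather than an allowed edge of the path.
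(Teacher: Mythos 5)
Your argument is correct: both directions are sound, the shortest-confined-path trick legitimately produces a chordless $xy$-path through $S$ in the forward direction, and the excursion bookkeeping in the converse correctly yields $b\geq a+2$, so the edge $p_ap_b$ granted by the clique hypothesis is a genuine chord. Note that the paper does not prove this statement at all --- it is imported as Theorem 2.1 of Dourado, Protti and Szwarcfiter \cite{dourado2010complexity} --- and your proof is the standard argument for this characterization, so there is nothing to reconcile with the text.
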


\begin{lemma}[\cite{gonzalez2020covering}, Lemma 14] \label{lem:gonzales-separator}
Let $G$ be a connected graph, $K$ a clique separator of $G$, and $X$ the union of some of the connected components of $G - K$.
Then $X \cup K$ is convex.
\end{lemma}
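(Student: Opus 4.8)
The plan is to invoke the characterization of convex sets given in Theorem~\ref{thm:dourado-convex}. Put $C = X \cup K$. To show that $C$ is convex it suffices, by that theorem, to prove that for every connected component $S$ of $G - C$ the frontier $F_G(C, S)$ is a clique. The first step is therefore to understand the connected components of $G - C$.

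Since $C = X \cup K$ and $X$ is a union of connected components of $G - K$, removing $C$ from $G$ simply deletes $K$ together with the components forming $X$. Hence the connected components of $G - C$ are precisely the connected components of $G - K$ that were not selected to form $X$. Let $S$ be such a component. The key observation---and really the only point requiring an argument---is that $N(S) \subseteq K$. Indeed, if a vertex $v \in S$ has a neighbor $u \notin S$, then $u$ cannot lie in another component of $G - K$, since the edge $uv$ would otherwise be an edge of $G - K$ placing $u$ in the same component $S$; therefore $u \in K$.

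With this in hand the conclusion is immediate. Because $S$ and $C$ are disjoint, $F_G(C, S) = C \cap N(S)$, and since $N(S) \subseteq K \subseteq C$ we obtain $F_G(C, S) = N(S) \subseteq K$. As $K$ is a clique, every subset of $K$ is a clique, so $F_G(C, S)$ is a clique. This holds for every component $S$ of $G - C$, and the degenerate cases in which $X$ collects all (resp.\ none) of the components of $G - K$, giving $C = V(G)$ (resp.\ $C = K$), are handled in the same way or trivially. Theorem~\ref{thm:dourado-convex} then yields that $C = X \cup K$ is convex. I do not expect a genuine obstacle here: the whole argument rests on the elementary fact that a clique separator confines the neighborhood of each component it separates to within the clique, after which Theorem~\ref{thm:dourado-convex} does the remaining work.
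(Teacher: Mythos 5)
Your argument is correct. Note that the paper does not prove this lemma at all—it is imported verbatim from Gonzalez et al.\ (their Lemma~14)—so there is no in-paper proof to compare against; your derivation via Theorem~\ref{thm:dourado-convex} is a clean, self-contained justification. The two load-bearing observations (the components of $G - (X \cup K)$ are exactly the unselected components of $G - K$, and each such component $S$ satisfies $N(S) \subseteq K$, so $F_G(X \cup K, S) = N(S)$ is a subset of the clique $K$) are both stated and verified correctly, and the degenerate cases are handled.
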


\begin{observation}[see also \cite{duchet1988convex}] \label{obs:connected}
In a connected graph $G$, every convex set is connected.
\end{observation}

\begin{theorem}[\cite{duchet1988convex}, Theorem 5.1] \label{thm:duchet-cara}
The monophonic convexity of a connected graph has Carathéodory number is $1$ if the graph is a clique and $2$ otherwise.
\end{theorem}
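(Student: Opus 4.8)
The plan is to prove the two cases of the statement separately and, within each, to bound the Carathéodory number from above and from below.

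First I dispose of the clique case. If $G$ is complete, then for distinct $u, v$ the only chordless $uv$-path is the edge $uv$, so $J[u,v] = \{u,v\}$ and hence $J[X] = X$ for every $X$. Thus every set is convex, $\cl(X) = X$, and $v \in \cl(X)$ forces $v \in X = \cl(\{v\})$; the Carathéodory number is therefore $1$ (it cannot be $0$, since $\cl(\emptyset) = \emptyset$ rules out a generating set of size zero). Now assume $G$ is not complete. For the lower bound, pick non-adjacent $u, w$; as $G$ is connected, a shortest $uw$-path is chordless of length at least two and hence has an interior vertex $z \in J[u,w] \subseteq \cl(\{u,w\})$. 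Since $\cl(\{u\}) = \{u\}$ and $\cl(\{w\}) = \{w\}$ both miss $z$, no singleton generates $z$, so the Carathéodory number is at least $2$. It remains to prove it is at most $2$, i.e.\ that $v \in \cl(X)$ always implies $v \in \cl(\{a,b\})$ for some $a, b \in X$ (the case $v \in X$ being trivial).

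The engine of the upper bound is a separator reformulation of hull membership, derived from Theorem \ref{thm:dourado-convex} and Lemma \ref{lem:gonzales-separator}: for $v \notin X$, one has $v \notin \cl(X)$ if and only if there is a clique $K$ with $v \notin K$ such that $X$ avoids the connected component $S$ of $v$ in $G - K$. Indeed, if such a $K$ exists then $\comp{S}$ is the union of $K$ with the remaining components of $G - K$, hence convex by Lemma \ref{lem:gonzales-separator}; it contains $X$ and excludes $v$, so $v \notin \cl(X)$. Conversely, if $v \notin \cl(X)$, let $S$ be the component of $v$ in $G - \cl(X)$ and set $K = F(\cl(X), S)$; Theorem \ref{thm:dourado-convex} makes $K$ a clique, and since any path leaving $S$ must cross $K$, the component of $v$ in $G - K$ is exactly $S$, which avoids $X \subseteq \cl(X)$.

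With this reformulation the upper bound becomes a Helly-type statement about clique separators: if, for every pair $a, b \in X$, some clique separates $v$ from $\{a,b\}$, then a single clique separates $v$ from all of $X$. I would prove the contrapositive by induction on the stage at which $v$ enters $\cl(X)$: writing $v \in J[p,q]$ for hull vertices $p, q$ of smaller stage, the induction supplies pairs generating $p$ and $q$, and the real task is to compress these (up to four) generators down to two. The geometric fact driving the compression is that an induced path meets any clique in at most two consecutive vertices, combined with the clique-frontier property of Theorem \ref{thm:dourado-convex}: a convex set $C$ containing an endpoint of the chordless $pq$-path but excluding $v$ must be left by that path through a clique frontier, and each such crossing consumes at most two vertices of the path. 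Tracking how the single chordless path through $v$ crosses the convex sets witnessing the hypothesised pairwise separations is what caps the number of generators genuinely needed at two.

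The main obstacle is exactly this compression step. The naive induction replaces one generator by two at every stage and so only bounds the Carathéodory number by the depth of the derivation; extracting a bound of $2$ independent of that depth requires the crossing argument above. Making it rigorous — in particular showing that convex witnesses $C_1 \ni p$ and $C_2 \ni q$, each excluding $v$, can be amalgamated into one clique separating $v$ from $X$ — is the delicate part. By contrast, the clique case, the lower bound, and the separator reformulation are routine.
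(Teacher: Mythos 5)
Note first that the paper does not prove this statement at all: it is imported verbatim as Theorem 5.1 of Duchet's paper \cite{duchet1988convex} and used as a black box (its only role here is to bound the size of minimal forbidden sets in Proposition \ref{prop:saturation-polynomial}). So there is no in-paper proof to compare against; I can only assess your argument on its own terms.

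Your clique case, your lower bound, and your separator reformulation of hull membership are all correct: in a complete graph $J[u,v]=\{u,v\}$ so every set is convex; an interior vertex of a shortest path between two non-adjacent vertices witnesses Carathéodory number at least $2$; and the equivalence ``$v \notin \cl(X)$ iff some clique $K$ with $v \notin K$ has $X$ disjoint from the component of $v$ in $G-K$'' does follow from Theorem \ref{thm:dourado-convex} and Lemma \ref{lem:gonzales-separator} exactly as you argue. But the entire content of the theorem is the upper bound, and that is precisely where your proposal stops being a proof. You reduce it to a Helly-type claim about clique separators (if every pair of $X$ can be clique-separated from $v$, so can all of $X$), correctly observe that the naive induction via $v \in J[p,q]$ only yields ``$v \in \cl(\{a,b,c,d\})$ for four generators'' and hence bounds the Carathéodory number by the derivation depth rather than by $2$, and then describe the needed four-to-two compression only in outline (``tracking how the single chordless path through $v$ crosses the convex sets witnessing the hypothesised pairwise separations''), explicitly flagging the amalgamation of the two convex witnesses $C_1 \ni p$, $C_2 \ni q$ into one clique separator as ``the delicate part'' left undone. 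That amalgamation \emph{is} the theorem: the crossing-counting heuristic you give (an induced path meets a clique in at most two consecutive vertices) does not by itself produce the required pair $\{x,y\} \subseteq X$, because the chordless $pq$-path through $v$ need not interact with the separators witnessing $p \notin \cl(\{c,d\})$ or $q \notin \cl(\{a,b\})$ in any controlled way, and nothing in the sketch rules out that every pair from $\{a,b,c,d\}$ is clique-separable from $v$ while the quadruple is not. As it stands the proposal establishes Carathéodory number $\geq 2$ but not $\leq 2$, so the main claim remains unproved.
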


\begin{theorem}[\cite{dourado2010complexity}, Theorem 4.1] \label{thm:dourado-poly}
Let $G$ be a graph and let $X \subseteq V(G)$.
Then, $\cl(X)$ can be computed in polynomial time in the size of $G$.
\end{theorem}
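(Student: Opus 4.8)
The plan is to compute $\cl(X)$ by a monotone ``repair'' process that grows a set $C$, initialized to $X$, while maintaining the invariant $X \subseteq C \subseteq \cl(X)$, until $C$ becomes convex; at that moment $C$ must equal $\cl(X)$. I would first reduce to the connected case: since a chordless path never leaves a connected component, $\cl(X)$ is the union of the hulls of $X \cap V(G_i)$ computed inside each component $G_i$ of $G$, so it suffices to run the procedure component by component and assume $G$ connected.

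The engine of the procedure is Theorem \ref{thm:dourado-convex}. Given the current set $C$, I would compute the connected components of $G - C$ and test, for each such component $S$, whether the frontier $F(C, S) = C \cap N(S)$ is a clique. If every frontier is a clique, then $C$ is convex and the procedure halts and returns $C$. Otherwise there is a component $S$ and two non-adjacent vertices $x, y \in F(C, S)$. Both $x$ and $y$ lie in $C$ and both have a neighbor in the connected set $S$, so in the induced subgraph $G[S \cup \{x, y\}]$ there is an $x$--$y$ path, and I would take a shortest such path $P$. A shortest path of an induced subgraph is chordless in $G$, so $P$ is a chordless $x$--$y$ path of $G$ whose internal vertices all lie in $S$, hence outside $C$. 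I then add $V(P) \cap S$ to $C$ and repeat.

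Correctness rests on two points. First, every vertex ever added is forced into the hull: the internal vertices of $P$ lie in $J[x, y]$, and since $x, y \in C \subseteq \cl(X)$ at the time of addition, monotonicity and idempotence of $\cl$ give $J[x,y] \subseteq \cl(C) \subseteq \cl(X)$, so the invariant $C \subseteq \cl(X)$ is preserved. Second, the procedure halts only when $C$ is convex, and then $X \subseteq C \subseteq \cl(X)$ with $C$ convex forces $C = \cl(X)$, because $\cl(X)$ is the least convex set containing $X$. Since $x \not\sim y$, the path $P$ has at least one internal vertex in $S \setminus C$, so each iteration strictly enlarges $C$; hence there are at most $\card{V(G)}$ iterations, each costing only a component computation, a clique test on the frontiers, and one shortest-path computation---all polynomial. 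This yields the claimed polynomial bound.

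The step I expect to require the most care is the correctness argument that it suffices, at each iteration, to add the vertices of a single chordless $x$--$y$ path rather than all of $J[x,y]$ (let alone all of $\cl(\{x,y\})$). The justification is that I never claim to reach convexity in one shot: I only need strict progress together with the guarantee that no superfluous vertex is ever added, and the loop-until-convex structure, validated by Theorem \ref{thm:dourado-convex}, supplies the rest. A secondary point to check cleanly is that Theorem \ref{thm:dourado-convex} is applied to the fixed connected graph $G$ while the tested set $C$ varies across iterations, and that the frontier $F(C, S)$ is exactly the neighborhood set whose clique-ness the theorem requires for each component $S$ of $G - C$.
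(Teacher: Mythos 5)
Your proof is correct. The paper does not actually prove this statement---it is imported from \cite{dourado2010complexity} (Theorem 4.1)---and your argument essentially reconstructs the proof from that reference: grow $C$ from $X$ and, while some component $S$ of $G - C$ exhibits the failure witness of Theorem \ref{thm:dourado-convex} (two non-adjacent frontier vertices $x, y \in F(C, S)$), add a chordless $x$--$y$ path through $S$; all the delicate points are handled, namely that a shortest $x$--$y$ path in the induced subgraph $G[S \cup xy]$ is chordless in $G$, that its internal vertices lie in $J[x, y] \subseteq \cl(X)$ so the invariant is preserved, that strict progress bounds the number of iterations by $\card{V(G)}$, and that the all-frontiers-are-cliques exit condition certifies convexity and hence $C = \cl(X)$ (this path-addition step is exactly what makes the hull tractable even though computing the intervals $J[u, v]$ themselves is hard).
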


We end these preliminaries by stating the problem we investigate in this paper.
It is the problem of separating two sets of vertices by half-spaces.
Its decision version is:
\begin{decproblem}
	\problemtitle{Half-space separation in monophonic convexity}
	\probleminput{A graph $G$ and two (non-empty and disjoint) subsets $A, B$ of $V(G)$.}
	\problemquestion{Are $A$ and $B$ half-space separable?} 
\end{decproblem}
Since $\cl$ can be computed in polynomial time by Theorem \ref{thm:dourado-poly} and $A, B$ are separable if and only if $\cl(A)$, $\cl(B)$ are separable, we can assume w.l.o.g.~that the sets $A$ and $B$ are convex.

\section{Half-space separation}
\label{sec:algorithm}

In this section, we prove Theorem \ref{thm:separability-polynomial}, which we first restate.

\THMseppoly*

Remark that if the input graph is not connected, one just has to solve the problem for each connected component.
Thus, we can consider without loss of generality that the graphs we consider are connected.
Hence, for the section, we fix a connected graph $G$.
Let $A, B$ be two (disjoint) convex sets of $G$.
To prove Theorem \ref{thm:separability-polynomial}, we give a polynomial time algorithm that decides whether $A$, $B$ are separable.
The algorithm first computes a shortest path $a = v_1, \dots, v_k = b$ for some $a \in A$ and $b \in B$ in polynomial time.
We show in Lemma \ref{lem:linked} that $A$ and $B$ are separable if and only if there exists $1 \leq i < k$ such that $ A_i := \cl(A \cup \{v_1, \dots, v_i\})$ and $B_i := \cl(B \cup \{v_{i+1}, \dots, v_k\})$ are separable.
This step is the \emph{linkage} of $A$ and $B$.
Then, for each $i$, the algorithm does the subsequent operations:
\begin{enumerate}[(1)]
    \item It computes the \emph{saturation} $A'_i := S(A_i, B_i)$, $B'_i := S(B_i, A_i)$ of $A_i, B_i$ (resp.) with respect to $\cl$ (see Subsection \ref{subsec:saturation}).
    Informally, the saturation step extends $A_i$ and $B_i$ with vertices that are forced on one of the two sides by the hull operator $\cl$.
    Lemma \ref{lem:saturation} shows that $A_i$, $B_i$ are separable if and only if $A_i'$, $B'_i$ are separable.
    Corollary \ref{cor:saturation-polynomial} proves that computing saturation takes polynomial time.
    \item From $A_i'$ and $B_i'$, it builds an equivalence relation $\equiv_{A'_i B'_i}$ on $\comp{A'_i \cup B'_i}$ and an associated graph $G_{A'_i B'_i}$.
    Theorem \ref{thm:linked-separability} states that $A'_i$ and $B'_i$ are separable if and only if $G_{A'_i B'_i}$ is bipartite and no equivalence class of $\equiv_{A'_i B'_i}$ contains a so-called \emph{forbidden pair} of vertices.
    Finally, Theorem \ref{thm:linked-separability-poly} proves that the conditions of Theorem \ref{thm:linked-separability} can be tested in polynomial time.
\end{enumerate}
The algorithm output that $A$ and $B$ are separable if there is an integer $i$ for which step (2) succeeds.
Otherwise, $A$ and $B$ are not separable.
The correctness of the algorithm follows from Lemma \ref{lem:linked}, Lemma \ref{lem:saturation} and Theorem \ref{thm:linked-separability}.
The fact that it runs in polynomial time is a consequence of Corollary \ref{cor:saturation-polynomial} and Theorem \ref{thm:linked-separability-poly}.
This proves Theorem \ref{thm:separability-polynomial}.

The rest of the section is dedicated to the proof of the aforementioned statements.

\subsection{Linkage along a shortest path}
\label{subsec:linkage}

Let $A, B$ be two non-empty disjoint convex subsets of $V(G)$.
Assume that $A$ and $B$ are separable and let $H, \comp{H}$ be half-spaces separating $A$ and $B$.
Then for each $a \in A$, and each $b \in B$, all the vertices on the shortests $ab$-paths are distributed among $H$ and $\comp{H}$.
We show that, in fact, for each shortest $ab$-path, there is a vertex before which all vertices are assigned one half-space and all vertices after which are assigned the other half-space.

\begin{proposition}[restate=PROPabpath, label=prop:ab-path] 
Let $a \in A$, $b \in B$ and let $a = v_1, \dots, v_k = b$ be a shortest $ab$-path.
For every half-space separation $H, \comp{H}$ of $A$ and $B$ with $A \subseteq H$ and $B \subseteq \comp{H}$, there exists $1 \leq i < k$ such that $\{v_1, \dots, v_i\} \subseteq H$ and $\{v_{i+1}, \dots, v_k\} \subseteq \comp{H}$.
\end{proposition}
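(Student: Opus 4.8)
The plan is to show that the set of indices $j$ with $v_j \in H$ forms an initial segment $\{1, \dots, i\}$ of $\{1, \dots, k\}$, which is precisely the desired conclusion. The structural fact I would exploit is that a shortest $ab$-path is chordless, and that every contiguous subpath $v_j, v_{j+1}, \dots, v_l$ of an induced path is again an induced (hence chordless) $v_j v_l$-path of $G$.

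First I would record the two boundary facts: $v_1 = a \in A \subseteq H$ and $v_k = b \in B \subseteq \comp{H}$, so the path starts in $H$ and ends in $\comp{H}$. In particular the set $\{j : v_j \in H\}$ is non-empty and does not contain the index $k$.

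The main step is a convexity argument applied to $H$. Suppose $v_j, v_l \in H$ with $j \leq l$. Since $v_j, v_{j+1}, \dots, v_l$ is a chordless $v_j v_l$-path, each of its vertices lies on a chordless path between $v_j$ and $v_l$ and therefore belongs to the monophonic interval $J[v_j, v_l]$. As $H$ is convex and contains both $v_j$ and $v_l$, we obtain $J[v_j, v_l] \subseteq J[H] = H$, so every $v_m$ with $j \leq m \leq l$ lies in $H$. This shows that $\{j : v_j \in H\}$ is an interval of consecutive indices.

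To conclude, I would let $i$ be the largest index with $v_i \in H$; this is well defined since $v_1 \in H$, and $i < k$ because $v_k \in \comp{H}$. The interval property together with $v_1 \in H$ forces $\{v_1, \dots, v_i\} \subseteq H$, while maximality of $i$ gives $\{v_{i+1}, \dots, v_k\} \subseteq \comp{H}$, yielding the required index with $1 \leq i < k$. The only real obstacle is ensuring the convexity step applies, i.e.\ that the relevant subpath is genuinely chordless; this is immediate once one observes that a contiguous subpath of an induced (shortest) path remains induced. Note that $\comp{H}$ plays no special role beyond absorbing the vertices not forced into $H$, so a single application of convexity to $H$ suffices.
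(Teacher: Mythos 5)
Your proof is correct and rests on the same key fact as the paper's: a contiguous subpath of the shortest (hence chordless) $ab$-path is chordless, so its vertices lie in the monophonic interval of its endpoints and are absorbed by the convex set $H$. The paper phrases this as a proof by contradiction (picking the largest $j$ with $v_j \in H$ and a gap index $\ell$), whereas you argue directly that $\{j : v_j \in H\}$ is an initial segment, but the underlying argument is the same.
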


\begin{proof}
    Assume for contradiction there exists half-spaces $H$, $\comp{H}$ with $A \subseteq H$, $B \subseteq \comp{H}$, and such that for every $1 \leq i < k$, either $\{v_1, \dots, v_i\} \nsubseteq H$ or $\{v_{i+1}, \dots, v_k\} \nsubseteq \comp{H}$.
    Observe that $k \geq 4$ must hold.
    Consider the case $i = 1$ so that $a = v_1$ and $v_1 \in H$ by definition of $H$.
    By assumption, there exists $i < j < k$ such that $v_j \in H$.
    Consider the largest such $j$.
    Then, $\{v_1, \dots, v_j\} \nsubseteq H$ must also hold.
    Thus, there exists $1 < \ell < j$ such that $v_{\ell} \in \comp{H}$.
    As $a = v_1, \dots, v_k = b$ is a shortest $ab$-path, it is chordless and $a = v_1, \dots, v_j$ is a chordless $av_j$-path.
    We deduce $v_{\ell} \in J[a, v_j] \subseteq H$ and $H \cap \comp{H} \neq \emptyset$, a contradiction with $H, \bar H$ being half-spaces.
    This concludes the proof.
    \end{proof}

\begin{figure}[ht!]
\centering
\includegraphics[scale=\FIGLinkage]{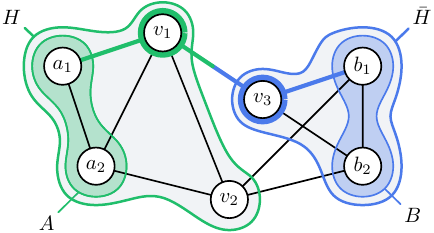}  
\caption{A graph $G$ with two disjoint convex sets $A= \{a_1, a_2\}$ and $B = \{b_1, b_2\}$ (circled in green and blue resp.).
$A$ and $B$ are not linked, but they can be linked along the path $a_1, v_1, v_3, b_1$ (in bold green / bold blue).
Namely, $A \cup v_1$ and $B \cup v_3$ are linked and convex.
Two half-spaces $H, \comp{H}$ separating $A \cup v_1$ and $B \cup v_3$ (hence $A$ and $B$) are drawn.}
\label{fig:linkage}
\end{figure}

Following Proposition \ref{prop:ab-path}, we say that $A$ and $B$ are \emph{linked} if there exists $a \in A$, $b \in B$ such that $ab \in E(G)$.
Linked sets and Proposition \ref{prop:ab-path} are illustrated in Figure \ref{fig:linkage}.
The next lemma is a direct consequence of Proposition \ref{prop:ab-path}.

\begin{lemma}[restate=LEMlinked, label=lem:linked] 
Let $G$ be a connected graph and let $A, B$ be two non-empty disjoint convex subsets of $V(G)$.
Let $a \in A$, $b \in B$ and let $a = v_1, \dots, v_k = b$ be a shortest $ab$-path.
Then, $A$ and $B$ are separable if and only if there exists $1 \leq i < k$ such that $\cl(A \cup \{v_1, \dots, v_i\})$ and $\cl(B \cup \{v_{i+1}, \dots, v_k\})$ are separable.
\end{lemma}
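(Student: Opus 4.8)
The plan is to derive Lemma~\ref{lem:linked} as a straightforward corollary of Proposition~\ref{prop:ab-path}, proving both implications separately.

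For the forward direction, suppose $A$ and $B$ are separable, and let $H, \comp{H}$ be half-spaces with $A \subseteq H$ and $B \subseteq \comp{H}$. I would apply Proposition~\ref{prop:ab-path} directly to the fixed shortest $ab$-path $a = v_1, \dots, v_k = b$: it yields an index $1 \leq i < k$ with $\{v_1, \dots, v_i\} \subseteq H$ and $\{v_{i+1}, \dots, v_k\} \subseteq \comp{H}$. Since $A \subseteq H$ as well, we have $A \cup \{v_1, \dots, v_i\} \subseteq H$, and because $H$ is convex and the hull is the smallest convex set containing a given set, $\cl(A \cup \{v_1, \dots, v_i\}) \subseteq H$. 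Symmetrically, $\cl(B \cup \{v_{i+1}, \dots, v_k\}) \subseteq \comp{H}$. Thus the very same half-spaces $H, \comp{H}$ witness the separability of $\cl(A \cup \{v_1, \dots, v_i\})$ and $\cl(B \cup \{v_{i+1}, \dots, v_k\})$ for this $i$.

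For the converse, suppose there is some $1 \leq i < k$ for which $\cl(A \cup \{v_1, \dots, v_i\})$ and $\cl(B \cup \{v_{i+1}, \dots, v_k\})$ are separable by half-spaces $H, \comp{H}$. Since $A \subseteq A \cup \{v_1, \dots, v_i\} \subseteq \cl(A \cup \{v_1, \dots, v_i\}) \subseteq H$ and similarly $B \subseteq \comp{H}$, the same $H, \comp{H}$ separate $A$ and $B$. This direction is essentially trivial by monotonicity of inclusion and the defining property of $\cl$.

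The main (and only) subtlety is the forward direction, where one must invoke Proposition~\ref{prop:ab-path} to guarantee the clean ``prefix/suffix'' split along the path; without it one could only say the path vertices are distributed arbitrarily between $H$ and $\comp{H}$, which would not give a single cut index $i$. Once Proposition~\ref{prop:ab-path} is granted, no real obstacle remains: the argument is just the observation that the hull of a set contained in a convex set is still contained in it. I would therefore keep the proof short, stating it explicitly as a consequence of the proposition.
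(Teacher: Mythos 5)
Your proof is correct and is exactly the argument the paper intends: the authors state Lemma~\ref{lem:linked} as ``a direct consequence of Proposition~\ref{prop:ab-path}'' without writing out details, and your write-up supplies precisely those details (the prefix/suffix split from the proposition plus the fact that the hull of a set contained in a convex half-space stays inside it, and the trivial converse by monotonicity).
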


Given $a \in A$ and $b \in B$, finding a shortest $ab$-path can be done in polynomial time.
Hence, making $A$ and $B$ linked can be done efficiently.
Moreover, if $A$ and $B$ are linked, then for any disjoint $A', B' \subseteq V$ such that $A \subseteq A'$ and $B \subseteq B'$, $A'$ and $B'$ must be linked too.
In what follows, we will thus consider disjoint, convex and linked subsets of $V(G)$.

\subsection{Saturation with the hull operator}
\label{subsec:saturation}

Let $A, B$ be two disjoint, linked and convex subsets of $V(G)$.
In this part, we use the hull operator $\cl$ to define two sets $S(A, B)$ and $S(B, A)$---the saturation of $A$ and $B$ (see below)---with $A \subseteq S(A, B)$, $B \subseteq S(B, A)$ and such that $A, B$ are separable if and only if their saturation is separable.
Informally, we use $\cl$ to identify vertices that will appear in the same half-space as $A$ in any possible half-space separation of $A$ (and similarly with $B$), if any.
We use two properties built on $\cl$:
\begin{enumerate}[(1)] 
\item \textbf{Shadow-closing}.
Remind that $A / B$, the shadow of $A$ with respect to $B$, is defined by $A / B = \{v \in V(G) \st \cl(B \cup v) \cap A \neq \emptyset\}$.
In particular, $A \subseteq A / B$.

\item \textbf{Forbidden sets}.
Let $X \subseteq \comp{A \cup B}$ and assume that $\cl(X) \cap A \neq \emptyset$ and $\cl(X) \cap B \neq \emptyset$.
Since $\cl(v) = \{v\}$ for all $v \in V$, we have $\card{X} \geq 2$.
Thus, separating $A, B$ implies to split the vertices of $X$.
We say that $X$ is a \emph{forbidden set} of $A$ and $B$ with respect to $G$.
A set $X$ is forbidden if and only if it includes an inclusion-wise minimal forbidden set as a subset.
Henceforth, in order to use forbidden sets, we need only consider the family of inclusion-wise minimal forbidden sets, denoted $\mfs(A, B)$.
Formally,
\[ 
\mfs(A, B) = \min_{\subseteq} \{X \subseteq \comp{A \cup B} \st \cl(X) \cap A \neq \emptyset \text{ and } \cl(X) \cap B \neq \emptyset \}.
\]
\end{enumerate}
\begin{figure}[ht!]
\centering
\includegraphics[page=1, scale=\FIGPreSatOne]{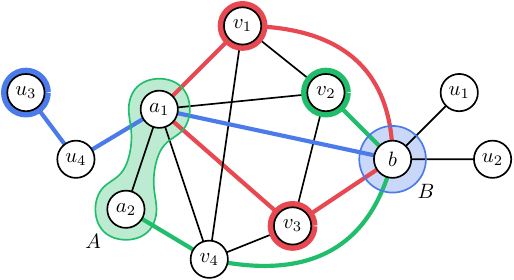}  
\caption{A graph $G$ in which we seek to separate $A$ and $B$ (in circled green/blue resp.).
The vertex $a_1$ is on a chordless $u_3b$-path (bold blue), so that $u_3 \in A / B$.
Dually, $b$ is on a chordless $v_2a_2$-path (bold green), i.e., $v_2 \in B / A$.
Besides, $\cl(v_1v_3)$ intersects both $A$ and $B$ (bold red).
Thus, $v_1, v_3$ must be separated to separate $A$ and $B$, and $v_1v_3 \in \mfs(A, B)$ holds.}
\label{fig:pre-saturation-1}
\end{figure}
We illustrate shadows and forbidden sets in Figure \ref{fig:pre-saturation-1}.
Now, we use $A / B$ (resp. $B / A$) and $\mfs(A, B)$ in view of separating $A$ and $B$.
On the one hand, $A / B$ cannot be separated from $A$ by definition.
On the other hand, for each $X \in \mfs(A, B)$ and every half-spaces $H, \bar H$ separating $A$ and $B$ with $A \subseteq H$, there exists at least one $x \in X$ such that $x \in H$, so that, $\bigcap_{x \in X} \cl(A \cup x) \subseteq H$ always hold.
Based on the previous arguments, we define the \emph{pre-saturation} of $A$ with respect to $B$ in $G$, denoted by $\sigma(A, B)$, by:%
\[ 
\sigma(A, B) = \cl\left(A / B \cup \bigcup \left\{ \bigcap_{x \in X} \cl(A \cup x) \st X \in \mfs(A, B)\right\}\right) 
\]
Observe that if $A \cap B \neq \emptyset$, then $\sigma(A, B) = \sigma(B, A) = V(G)$ as $A / B = B / A = V(G)$.
In this case though, $A$ and $B$ cannot be separated.
We prove in the next statement that $\sigma(A, B)$ preserves separation.
Remark that it holds regardless of the disjointness of $A$ and $B$.

\begin{lemma}[restate=LEMPreSaturation, label=lem:pre-saturation] 
Let $G$ be a connected graph, and let $A, B$ be linked and convex subsets of $V(G)$.
Then, $A, B$ are separable if and only if $\sigma(A, B)$ and $\sigma(B, A)$ are separable.
\end{lemma}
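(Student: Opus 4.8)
The plan is to prove both implications of the equivalence separately. The forward direction is immediate from the definitions: if $H, \comp{H}$ are half-spaces separating $A$ and $B$ with $A \subseteq H$ and $B \subseteq \comp{H}$, then I would argue that $\sigma(A, B) \subseteq H$ and $\sigma(B, A) \subseteq \comp{H}$, which by symmetry shows the saturations are separated by the very same half-spaces. To get $\sigma(A, B) \subseteq H$, I would first note that $A / B \subseteq H$: indeed, if $v \in A / B$, then $\cl(B \cup v) \cap A \neq \emptyset$, and since $A \subseteq H$ while $\cl(B \cup v) \subseteq \comp{H}$ whenever $v \in \comp{H}$ (because $B \subseteq \comp{H}$ and $\comp{H}$ is convex), we cannot have $v \in \comp{H}$; hence $v \in H$. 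Next, for each $X \in \mfs(A, B)$, since $X$ is forbidden at least one $x \in X$ lies in $H$, and for that $x$ we have $\cl(A \cup x) \subseteq H$; therefore $\bigcap_{x \in X} \cl(A \cup x) \subseteq \cl(A \cup x) \subseteq H$. Taking the union over all $X$ and unioning with $A / B$ gives a subset of the convex set $H$, and applying $\cl$ preserves this containment since $H$ is convex. Thus $\sigma(A, B) \subseteq H$, and the forward direction follows.

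The reverse direction is the genuinely easy half, essentially by monotonicity. If $\sigma(A, B)$ and $\sigma(B, A)$ are separable, then since $A \subseteq \sigma(A, B)$ and $B \subseteq \sigma(B, A)$ (both containments holding because $A \subseteq A / B$ and the argument of $\cl$ in the definition of $\sigma$ contains $A / B$, and $\cl$ is extensive), any half-spaces separating the saturations also separate $A$ and $B$. So the only real content is the forward direction, and the key observation making it work is that each forbidden set $X$ must be split by any separation, forcing one of its elements into $H$.

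The main obstacle I anticipate is not in either implication per se but in a subtle well-definedness point that must be handled carefully: the definition of $\sigma(A, B)$ presupposes that the sets $\bigcap_{x \in X} \cl(A \cup x)$ are meaningfully contained in whatever half-space contains $A$, and this relies on the fact that every $X \in \mfs(A, B)$ contributes at least one index $x$ with $\cl(A \cup x) \subseteq H$. This is exactly where the forbidden-set property is used, and I would want to state explicitly why a forbidden set cannot be entirely contained in $\comp{H}$: if all of $X \subseteq \comp{H}$, then $\cl(X) \subseteq \comp{H}$ (convexity of $\comp{H}$), contradicting $\cl(X) \cap A \neq \emptyset$ since $A \subseteq H$ and $H \cap \comp{H} = \emptyset$. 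Symmetrically, $X$ cannot lie entirely in $H$ because $\cl(X) \cap B \neq \emptyset$. So $X$ is genuinely split, and in particular some $x \in X$ lies in $H$.

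One further point I would verify before concluding is that the equivalence is genuinely symmetric under exchanging the roles of $A$ and $B$: the same separation $H, \comp{H}$ with $A \subseteq H$, $B \subseteq \comp{H}$ simultaneously gives $\sigma(A, B) \subseteq H$ and $\sigma(B, A) \subseteq \comp{H}$, using the dual versions of the two containments above (for $\sigma(B, A)$ one uses $B / A \subseteq \comp{H}$ and, for each $X \in \mfs(B, A)$, some $x \in X$ with $\cl(B \cup x) \subseteq \comp{H}$). This confirms that a single half-space separation of $A, B$ yields a half-space separation of $\sigma(A, B), \sigma(B, A)$, completing the forward direction and, together with the trivial reverse direction, the proof.
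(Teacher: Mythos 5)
Your proposal is correct and follows essentially the same route as the paper's proof: the reverse direction by extensivity of $\sigma$, and the forward direction by showing $A / B \subseteq H$ via convexity of $\comp{H}$, then using the splitting of each forbidden set to place $\bigcap_{x \in X} \cl(A \cup x)$ inside $H$, and finally closing under $\cl$. Your explicit justification of why a forbidden set cannot lie entirely on one side is exactly the argument the paper compresses into ``by definition of forbidden sets.''
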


\begin{proof}
The if part follows from $A \subseteq A / B \subseteq \sigma(A, B)$ and $B \subseteq B / A \subseteq \sigma(B, A)$.
We show the only if part. 
Suppose that $A$ and $B$ are separable and let $H, \comp{H}$ be half-spaces such that $A \subseteq H$, $B \subseteq \bar H$.
Let $v \in A / B$.
By definition, $\cl(B \cup v) \cap A \neq \emptyset$, hence $H \cap \comp{H} = \emptyset$ entails $v \in H$.
Now let $X \in \mfs(A, B)$.
By definition of forbidden sets, $X \cap H \neq \emptyset$ and $X \nsubseteq H$.
Thus, there exists $x \in X$ such that $x \in H$, which entails $\cl(A \cup x) \subseteq H$ as $H$ is convex.
Since $\bigcap_{x' \in X} \cl(A \cup x') \subseteq \cl(A \cup x)$ for each $x \in X$, we deduce
\[ A / B \cup \bigcup \left\{ \bigcap_{x \in X} \cl(A \cup x) \st X \in \mfs(A, B)\right\} \subseteq H.\]
As $H$ is convex, we get $\sigma(A, B) \subseteq H$.
Applying the symmetric reasoning on $\sigma(B, A)$ yields $\sigma(B, A) \subseteq \comp{H}$.
This concludes the proof.
\end{proof}

\begin{figure}[ht!]
\centering
\includegraphics[scale=\FIGPreSatTwo, page=2]{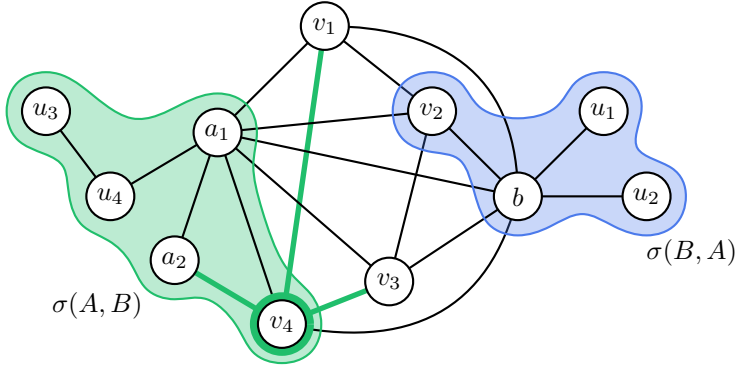}
\caption{Pre-saturation applied to the sets $A$ and $B$ of Figure \ref{fig:pre-saturation-1}.
For $\sigma(B, A)$, $u_1, u_2 \in B / A$ are added.
For $\sigma(A, B)$, we have $u_3, u_4 \in A / B$ and $v_4 \in \cl(A \cup v_1) \cap \cl(A \cup v_3)$ (paths in bold green) with $v_1v_3 \in \mfs(A, B)$.}
\label{fig:pre-saturation-2}
\end{figure}
We illustrate pre-saturation in Figure \ref{fig:pre-saturation-2}, where the operation is applied to the sets $A$ and $B$ of Figure \ref{fig:pre-saturation-1}.
In this example, once pre-saturation has been applied, no further vertices can be assigned by applying pre-saturation once more.
There are cases however where applying pre-saturation twice yields new vertices to assign.
Figure \ref{fig:saturation} illustrates this situation.
\begin{figure}[ht!]
\centering
\includegraphics[scale=\FIGSaturation, page=3]{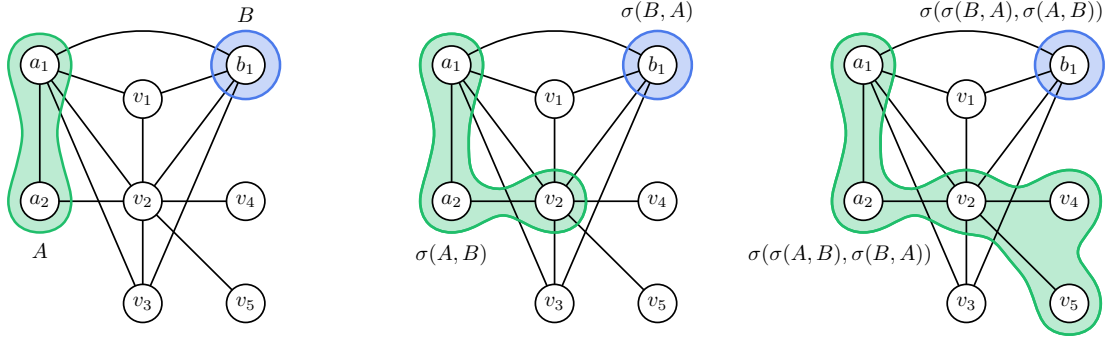}
\caption{An example where pre-saturation can be applied twice.
For $\sigma(A, B)$, $v_2$ is obtained from the forbidden pair $v_1v_3$.
Once $v_2$ is added, $v_4, v_5$ become part of $\sigma(A, B) / \sigma(B, A)$.
Observe that $B = \sigma(B, A) = \sigma(\sigma(B, A), \sigma(A, B))$.
The remaining vertices $v_1, v_3$ can be separated in any way.}
\label{fig:saturation}
\end{figure}
This suggests to iteratively apply the pre-saturation operator until no more vertices are added.
For $A, B \subseteq V$, the \emph{saturation} of $A$ with respect to $B$, denoted by $S(A, B)$ is defined as follows:
\[ 
S(A, B) = \bigcup_{i = 0}^{\infty} \sigma(A_i, B_i)
\]
where $A_0 = A$, $B_0 = B$ and for all $1 \leq i$, $A_i = \sigma(A_{i-1}, B_{i-1})$ and $B_i = \sigma(B_{i - 1}, A_{i - 1})$.
Given $A, B \subseteq V(G)$, we say that $A$ and $B$ are \emph{saturated} if $A = S(A, B)$ and $B = S(B, A)$.
Since $\sigma$ is increasing, the procedure for computing $S(A, B)$ terminates after $\card{V(G)}$ steps at most.
Applying Lemma \ref{lem:pre-saturation} inductively on $1 \leq i$, we get:

\begin{lemma}[restate=LEMSaturation] \label{lem:saturation}
Let $G$ be a connected graph, and let $A, B$ be two linked and convex subsets of $V(G)$.
Then, $A, B$ are separable if and only if $S(A, B)$, $S(B, A)$ are separable.
\end{lemma}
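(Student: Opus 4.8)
The plan is to prove both directions at once by iterating Lemma \ref{lem:pre-saturation}, crucially exploiting that this lemma was stated without any disjointness assumption. First I would unwind the definition of $S(A,B)$. Since $A_{i+1} = \sigma(A_i, B_i)$ and $B_{i+1} = \sigma(B_i, A_i)$ by construction, the union $\bigcup_{i \geq 0} \sigma(A_i, B_i)$ equals $\bigcup_{i \geq 0} A_{i+1}$, which together with $A_0 = A \subseteq A_1$ gives $S(A,B) = \bigcup_{i \geq 0} A_i$. Because $A \subseteq A/B \subseteq \sigma(A,B)$, the operator $\sigma$ is increasing, so the chains $A_0 \subseteq A_1 \subseteq \cdots$ and $B_0 \subseteq B_1 \subseteq \cdots$ are monotone in the finite groundset $V(G)$ and therefore stabilize after at most $\card{V(G)}$ steps, say at index $n$, with $S(A,B) = A_n$ and $S(B,A) = B_n$. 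This reduces the statement to showing that $A, B$ are separable if and only if $A_n, B_n$ are.

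Next I would prove, by induction on $i \geq 0$, the claim that $A, B$ are separable if and only if $A_i, B_i$ are separable. The base case $i = 0$ is immediate as $A_0 = A$ and $B_0 = B$. For the inductive step, the point is to verify that Lemma \ref{lem:pre-saturation} applies to the pair $(A_i, B_i)$, i.e.\ that $A_i$ and $B_i$ are linked and convex. Convexity is immediate, since for $i \geq 1$ the set $A_i = \sigma(A_{i-1}, B_{i-1})$ is obtained by applying $\cl$ and is hence convex, while $A_0 = A$ is convex by hypothesis (symmetrically for $B_i$). Linkedness follows from monotonicity: since $A \subseteq A_i$ and $B \subseteq B_i$, any edge witnessing that $A, B$ are linked also witnesses that $A_i, B_i$ are linked. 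Lemma \ref{lem:pre-saturation} then yields that $A_i, B_i$ are separable if and only if $\sigma(A_i, B_i) = A_{i+1}$ and $\sigma(B_i, A_i) = B_{i+1}$ are separable; combining this with the induction hypothesis gives the claim for $i+1$. Taking $i = n$ completes the proof.

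The point requiring the most care is that the pairs $(A_i, B_i)$ need not remain disjoint as $i$ grows: the saturation may force a vertex onto both sides, in which case $A_i$ and $B_i$ overlap and are trivially not separable. This is precisely why Lemma \ref{lem:pre-saturation} was proved \emph{without} a disjointness hypothesis, and why the induction is valid: the biconditional ``$A_i, B_i$ separable iff $A_{i+1}, B_{i+1}$ separable'' continues to hold even after the two sides intersect, at which stage both sides of the equivalence are simply false and propagate consistently. Apart from this, the only facts to check are the preservation of convexity and of linkedness along the chain, both of which are routine, so the argument reduces essentially to the clean iteration of Lemma \ref{lem:pre-saturation} together with the finiteness of $V(G)$.
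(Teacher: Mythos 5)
Your argument is correct and is exactly the paper's proof: the paper establishes this lemma by ``applying Lemma \ref{lem:pre-saturation} inductively on $i$,'' relying (as you do) on the fact that that lemma was stated without a disjointness hypothesis and that each pair $(A_i, B_i)$ stays linked and convex along the stabilizing chain. Your write-up just makes these routine verifications explicit.
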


\begin{remark}
If $A_i \cap B_i \neq \emptyset$ for some $i$, then $S(A, B) = S(B, A) = V(G)$, and no separation can distinguish $S(A, B)$ and $S(B, A)$.
In particular, $A, B$ are thus not separable.
\end{remark}

To conclude this paragraph, we argue that $S(A, B)$ can be computed in polynomial time in the size of $G$.
Since $S$ is at most $\card{V(G)}$ applications of $\sigma$ on subsets of $V(G)$, it is sufficient to show that $\sigma$ can be computed in polynomial time.
The bottleneck of computing $\sigma$ lies in finding $\mfs(A, B)$.
However, the fact that the Carathéodory number of monophonic convexity is $2$ by Theorem \ref{thm:duchet-cara} makes the sets in $\mfs(A, B)$ of constant size.

\begin{proposition}[restate=PROPSatPolynomial, label=prop:saturation-polynomial] 
Given $A, B \subseteq V(G)$, $\sigma(A, B)$ can be computed in polynomial time in the size of $G$.
\end{proposition}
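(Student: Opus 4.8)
The plan is to reduce the computation of $\sigma(A, B)$ to three ingredients, each of which I would argue runs in polynomial time: computing the shadow $A / B$, computing the family $\mfs(A, B)$, and computing the intersections $\bigcap_{x \in X} \cl(A \cup x)$ for each $X \in \mfs(A, B)$, followed by one final hull computation. Throughout I rely on Theorem \ref{thm:dourado-poly}, which guarantees that a single application of $\cl$ to any subset of $V(G)$ takes polynomial time.

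First I would handle the shadow. By definition $A / B = \{v \in V(G) \st \cl(B \cup v) \cap A \neq \emptyset\}$, so I would simply iterate over all $v \in V(G)$, compute $\cl(B \cup v)$, and test whether it meets $A$. This is $\card{V(G)}$ hull computations, hence polynomial. Next, and this is the crux, I would compute $\mfs(A, B)$. Here the key observation is Theorem \ref{thm:duchet-cara}: since the Carathéodory number of monophonic convexity is at most $2$, whenever $\cl(X)$ meets $A$ there is a subset of $X$ of size at most $2$ whose hull already meets $A$, and likewise for $B$. This forces every inclusion-minimal forbidden set to have size at most $4$ (two vertices witnessing the intersection with $A$ and two witnessing the intersection with $B$). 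Consequently I can enumerate $\mfs(A, B)$ by examining all subsets of $\comp{A \cup B}$ of size at most $4$, of which there are $O(\card{V(G)}^4)$, testing each with two hull computations, and keeping the inclusion-minimal ones. The minimality filter is a polynomial post-processing step over a polynomially bounded family.

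Once $\mfs(A, B)$ is in hand, for each $X \in \mfs(A, B)$ I would compute $\cl(A \cup x)$ for every $x \in X$ (at most four hull computations per set) and intersect them; taking the union of these intersections with $A / B$ and applying one more $\cl$ yields $\sigma(A, B)$ directly from its defining formula. Since $\card{\mfs(A, B)} = O(\card{V(G)}^4)$ and each set contributes a constant number of hull computations and intersections, the whole assembly remains polynomial.

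The main obstacle I anticipate is justifying rigorously that minimal forbidden sets really are bounded in size, i.e. turning the Carathéodory bound into the claim $\card{X} \leq 4$ for $X \in \mfs(A, B)$. The subtlety is that $\cl(X) \cap A \neq \emptyset$ gives, via Carathéodory, a pair $Y_A \subseteq X$ with $\cl(Y_A) \cap A \neq \emptyset$, and similarly a pair $Y_B$ for $B$; but one must check that $Y_A \cup Y_B$ (of size at most $4$) is itself forbidden and that it still lies in $\comp{A \cup B}$, so that any genuinely minimal forbidden set cannot exceed this bound. Once that counting argument is nailed down, the bound on $\card{\mfs(A, B)}$ and hence the polynomial running time follow routinely.
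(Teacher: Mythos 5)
Your proposal is correct and follows essentially the same route as the paper: compute $A/B$ by one hull computation per vertex, use the Carath\'eodory number $2$ of monophonic convexity (Theorem \ref{thm:duchet-cara}) to bound minimal forbidden sets by four vertices so that $\mfs(A,B)$ can be enumerated over all $O(\card{V(G)}^4)$ small subsets, and then assemble $\sigma(A,B)$ with a constant number of hull calls per forbidden set. The subtlety you flag at the end is resolved exactly as you suggest (the union $Y_A \cup Y_B$ of the two Carath\'eodory witnesses is itself a forbidden subset of $X$ contained in $\comp{A \cup B}$, forcing $X = Y_A \cup Y_B$ by minimality), which is the argument the paper uses.
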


\begin{proof}
Recall that $\cl(A)$ can be computed in polynomial time for every $A \subseteq V$ by Theorem \ref{thm:dourado-poly}.
Henceforth, $A / B$ can be computed in polynomial time by checking whether $\cl(B \cup v) \cap A \neq \emptyset$ for each $v \notin A \cup B$.

We move to the computation of $\mfs(A, B)$. 
Let $X$ be a forbidden set of $A, B$.
It follows that $G$ is not a clique.
Hence, since the Carathéodory number of the monophonic convexity is 2 by Theorem \ref{thm:duchet-cara}, we deduce that there exists $u_1, v_1, u_2, v_2 \in X$ and $a \in A$, $b \in B$ such that $a \in \cl(u_1 v_1)$ and $b \in \cl(u_2 v_2)$, possibly with $u_1 = u_2$ or $v_1 = v_2$.
Hence, if $X$ is chosen minimal among forbidden sets, i.e., $X \in \mfs(A, B)$, $X$ precisely consists in the elements $u_1, v_1, u_2, v_2$. 
Thus, $\card{X} \leq 4$.
Therefore, $\mfs(A, B)$ can be computed in polynomial time by checking $\cl(X)$ for all subsets $X$ of $V(G)$ of size at most $4$.
Computing $\bigcap_{x \in X} \cl(A \cup x)$ for some $X \in \mfs(A, B)$ thus requires a constant number of calls to $\cl$.
We conclude that $\sigma(A, B)$ can be computed in polynomial time as required.
\end{proof}

\begin{corollary}[restate=CORSatPolynomial] \label{cor:saturation-polynomial}
Given $A, B \subseteq V(G)$, $S(A, B)$ can be computed in polynomial time in the size of $G$.
\end{corollary}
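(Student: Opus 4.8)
The plan is to bound the number of times the pre-saturation operator $\sigma$ must be applied and then invoke Proposition \ref{prop:saturation-polynomial}, which already guarantees that each such application runs in polynomial time. Since $S(A, B)$ is the union of the sets $\sigma(A_i, B_i)$ along the sequence obtained by alternately applying $\sigma$ to the pairs $(A_i, B_i)$ and $(B_i, A_i)$, it suffices to show that this sequence stabilises after polynomially many iterations, after which the union is a single set that we have already computed.

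First I would record the key monotonicity fact, which is implicit in the definition of $\sigma$: for every $X, Y \subseteq V(G)$ we have $X \subseteq X / Y \subseteq \sigma(X, Y)$, so $A_{i-1} \subseteq A_i$ and $B_{i-1} \subseteq B_i$ for all $i \geq 1$. Hence both $(A_i)_{i \geq 0}$ and $(B_i)_{i \geq 0}$ are nondecreasing chains of subsets of the finite set $V(G)$. At each step $i$, either at least one of the two sets strictly grows, adding a new vertex of $V(G)$, or we have reached a fixed point with $A_i = A_{i-1}$ and $B_i = B_{i-1}$; in the latter case all subsequent sets coincide and $S(A, B) = A_i$. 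Because $\card{A_i} + \card{B_i}$ is bounded by $2\card{V(G)}$ and increases by at least one until stabilisation, the procedure halts after at most $2\card{V(G)}$ iterations, detectable by comparing consecutive pairs.

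Putting these together, computing $S(A, B)$ amounts to $O(\card{V(G)})$ applications of $\sigma$, each polynomial by Proposition \ref{prop:saturation-polynomial}, plus cheap set comparisons to detect the fixed point; the total running time is therefore polynomial in the size of $G$. I do not expect a substantial obstacle here: the genuine difficulty, namely computing a single $\sigma$ efficiently, rests on the Carathéodory number being $2$ and has already been dispatched in Proposition \ref{prop:saturation-polynomial}. The only new point is the termination bound, and this follows immediately from the monotonicity of $\sigma$ together with the finiteness of $V(G)$.
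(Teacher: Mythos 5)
Your argument is correct and matches the paper's own reasoning: the paper likewise observes that $\sigma$ is increasing, so the iteration stabilises after at most $\card{V(G)}$ steps, and then invokes Proposition \ref{prop:saturation-polynomial} for the cost of each step. Your accounting via $\card{A_i} + \card{B_i}$ is a slightly more explicit version of the same termination bound; there is nothing missing.
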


We note that saturation is not sufficient to decide separability, as suggested by Figure \ref{fig:saturation-not-sufficient}.
This motivates the last step of the algorithm. 
\begin{figure}[ht!]
\centering
\includegraphics[scale=\FIGSatNotSuf, page=4]{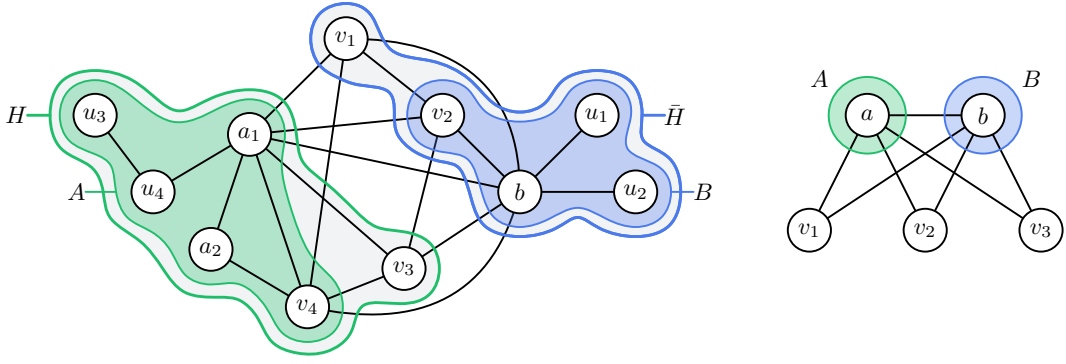}
\caption{Two cases where $A$ and $B$ are linked, convex and saturated.
On the left (follow-up of Figure \ref{fig:pre-saturation-2}), $A$ and $B$ can be separated (two half-spaces are drawn).
On the right, any bipartition of the vertices will contain one of the forbidden pair $v_1v_2, v_1v_3$ or $v_2v_3$.
Thus, $A$ and $B$ are not separable.}
\label{fig:saturation-not-sufficient}
\end{figure}

\subsection{Testing bipartiteness}
\label{subsec:decision}

Let $A, B$ be two linked, disjoint and saturated subsets of $V(G)$.
By definition of saturation, $A$ and $B$ are convex.
We characterize the separability of $A$ and $B$ using an equivalence relation $\eq$ on $\comp{A \cup B}$ and a graph $G_{AB}$ defined from $\eq$.
More precisely, we prove in Theorem \ref{thm:linked-separability} that $A$ and $B$ are separable if and only if $G_{AB}$ is bipartite and no two $\eq$-equivalent vertices form a forbidden pair of $\mfs(A, B)$.

As a preliminary step though, we give properties of $G$ and $N(A \cup B)$ in terms of $A$ and $B$.
We start with a statement that holds for every convex set.

\begin{proposition}[restate=PROPConvex, label=prop:convex] 
Let $C \subseteq V(G)$ be a convex set, and let $u, v$ be two distinct vertices of $V(G) \setminus C$.
Then:
\begin{enumerate}[(1)]
    \item if $u, v$ are not adjacent, then $\cl(uv) \cap C \neq \emptyset$ if and only if there exists $u', v' \in N(C)$ such that $u'v' \notin E(G)$ and $u', v' \in \cl(uv)$;
    \item if $u, v$ are adjacent, then $F(C, u) \setminus F(C, v) \neq \emptyset$ entails $u \in \cl(C \cup v)$.
\end{enumerate}
\end{proposition}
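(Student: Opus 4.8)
The plan is to prove both items by exhibiting concrete chordless paths, relying on three elementary facts: $\cl(uv)$ is convex, every convex set of the connected graph $G$ is connected (Observation~\ref{obs:connected}), and $V(P)\subseteq J[x,y]\subseteq\cl(W)$ whenever $P$ is a chordless $xy$-path and $x,y\in W$. I will also repeatedly use that for $u\notin C$ one has $F(C,u)=C\cap N(u)$. The two implications of item (1) have very different difficulty, so I would handle them separately, and item (2) will reduce to spotting a single length-two path.

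For item (1), I would dispatch the backward implication first. Given non-adjacent $u',v'\in N(C)\cap\cl(uv)$, I consider the induced subgraph $G[C\cup\{u',v'\}]$. Since $C$ is connected and $u',v'$ each have a neighbour in $C$, this subgraph is connected, hence contains an induced $u'v'$-path $P$; because an induced path of an induced subgraph is chordless in $G$, $P$ is a chordless $u'v'$-path. As $u'v'\notin E(G)$, $P$ has length at least two and all its internal vertices lie in $C$, so $P$ meets $C$. Finally $V(P)\subseteq J[u',v']\subseteq\cl(u'v')\subseteq\cl(uv)$ (using $u',v'\in\cl(uv)$ and convexity of $\cl(uv)$), whence $\cl(uv)\cap C\neq\emptyset$.

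The forward implication of (1) is where the real work lies, and I expect it to be the main obstacle. Starting from $c\in\cl(uv)\cap C$, I cannot point to a single chordless $uv$-path through $c$, since $c$ may only be produced after several rounds of $J$. Instead I would use the iterative definition $\cl(uv)=\bigcup_k X_k$ and pick the least $k$ with $X_k\cap C\neq\emptyset$; then some $c'\in X_k\cap C$ lies on a chordless path $P$ between two vertices $p,q\in X_{k-1}$, and minimality of $k$ forces $p,q\notin C$. Since $p,q\in\cl(uv)$ and $\cl(uv)$ is convex, $V(P)\subseteq\cl(uv)$. Walking along $P$ from $p$ to $q$, I take $u'$ to be the vertex immediately preceding the first vertex of $P$ in $C$ and $v'$ the vertex immediately following the last one; both lie outside $C$ yet are adjacent to a vertex of $C$, so $u',v'\in N(C)\cap\cl(uv)$. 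As they are separated on $P$ by at least one vertex of $C$, their distance along the chordless path $P$ is at least two, so $u'v'\notin E(G)$; this produces the required pair.

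Item (2) should be immediate once the frontier notation is unfolded. Picking $c\in F(C,u)\setminus F(C,v)=\big(C\cap N(u)\big)\setminus N(v)$ yields a vertex $c\in C$ with $cu\in E(G)$ and $cv\notin E(G)$; combined with $uv\in E(G)$, this makes $c,u,v$ a chordless $cv$-path, since the subgraph it induces is exactly a path on three vertices. Hence $u\in J[c,v]$, and as $c,v\in C\cup v$ I conclude $u\in J[c,v]\subseteq\cl(C\cup v)$, as claimed.
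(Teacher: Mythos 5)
Your proof is correct and follows essentially the same route as the paper's: the backward direction of (1) via connectedness of $C$ and a chordless $u'v'$-path through $C\cup\{u',v'\}$, and item (2) via the length-two chordless path $c,u,v$, are identical to the paper's arguments. The only (harmless) divergence is in the forward direction of (1), where you extract the witness chordless path from the first level $X_k$ of the iterative hull construction that meets $C$, whereas the paper gets it from the non-convexity of $\cl(uv)\setminus C$; both yield a chordless path inside $\cl(uv)$ with endpoints outside $C$ crossing $C$, from which the non-adjacent pair in $N(C)$ is read off in the same way.
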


\begin{proof}
    We prove item (1).
    We start with the if part.
    Let $u, v$ be non-adjacent and assume there exists $u', v' \in N(C)$ such that $u'v' \notin E(G)$ and $u', v' \in \cl(uv)$.
    Since $C$ is convex and $G$ is connected by assumption, $C$ is convex by Observation \ref{obs:connected}.
    Therefore, $C \cup u'v'$ is connected and contains a chordless $u'v'$-path.
    It follows $C \cap \cl(uv) \neq \emptyset$.
    We move to the only if part.
    Assume that $\cl(uv) \cap C \neq \emptyset$ and consider $\cl(uv) \setminus C$.
    We have that $\cl(uv) \setminus C$ is not convex, so there exists $u', v' \in \cl(uv) \setminus C$ such that $J[u', v'] \nsubseteq \cl(uv) \setminus C$ but $J[u', v'] \subseteq \cl(uv)$.
    Thus there exists a chordless $u'v'$-path $u' = v_1, \dots, v_k = v'$ and some $1 \leq i < i + 1 < j \leq k$ such that $v_i, v_j \notin C$ and $v_\ell \in C$ for each $i < \ell < j$.
    Without loss of generality, we can assume $u' = v_i$ and $v' = v_j$ so that $u', v' \in N(C)$.
    Since $v_1, \dots, v_k$ is chordless and goes through $C$, we deduce that $u', v'$ are not adjacent.
    This concludes the proof of item (1).
    
    We move to item (2).
    Assume that $uv \in E(G)$.
    Let $w \in F(C, u) \setminus F(C, v)$.
    Then, $w, u, v$ is a chordless $wv$-path.
    Hence, $u \in \cl(C \cup v)$ holds as required.
    \end{proof}

Leveraging from the fact that $A, B$ are linked and saturated, we use Proposition \ref{prop:convex} to show that every vertex in $N(A \cup B)$ is adjacent to both $A$ and $B$.

\begin{lemma}[restate=LEMNeighborsEqual, label=lem:neighbors-equal] 
Let $G$ be a connected graph and let $A, B$ be two linked, disjoint and saturated subsets of $V(G)$.
For every $v \in N(A \cup B)$, $F(A, B) \cup F(B, A) \subseteq N(v)$.
Therefore, the following properties hold for $A$ (and symmetrically for $B$):
\begin{enumerate}[(1)]
    \item $N(A) = F(B, A) \cup N(A \cup B)$;
    \item $F(A, \comp{A}) = F(A, N(A \cup B))$ is a clique.
\end{enumerate}
\end{lemma}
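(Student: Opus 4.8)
The plan is to first convert the saturation hypothesis into two clean identities that will drive everything: since $A \subseteq A/B \subseteq \sigma(A,B) \subseteq S(A,B) = A$ (and symmetrically for $B$), saturated sets satisfy $A/B = A$ and $B/A = B$. Reading off the definition of the shadow, this means precisely that \emph{no} vertex $v \notin A$ can have $\cl(B \cup v) \cap A \neq \emptyset$; i.e.\ no chordless path joining two vertices of $B \cup \{v\}$ may pass through $A$. This is the engine of the whole proof.

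I would then isolate the heart of the lemma as a single auxiliary claim: if $v \in N(A \cup B)$ is adjacent to some vertex of $A$, then $F(B,A) \subseteq N(v)$. Fix $w \in F(B,A) \subseteq B$ and suppose $vw \notin E(G)$. As $A$ is convex it is connected by Observation \ref{obs:connected}, so $G[A \cup \{v,w\}]$ is connected (both $v$ and $w$ have a neighbour in $A$). A shortest $vw$-path in this induced subgraph is chordless in $G$, has length at least two because $vw \notin E(G)$, and has all its interior vertices in $A$ since $v,w$ are its only vertices outside $A$. Any such interior vertex lies in $J[v,w] \subseteq \cl(\{v,w\}) \subseteq \cl(B \cup v)$, so $\cl(B \cup v) \cap A \neq \emptyset$, i.e.\ $v \in A/B = A$, contradicting $v \notin A \cup B$. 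I expect this chordless-path extraction to be the main obstacle: the two delicate points are that a path which is shortest inside an induced subgraph is genuinely chordless in $G$, and that it really threads through $A$ (which is exactly what being induced plus ``$v,w$ are the only non-$A$ vertices'' guarantees).

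From this auxiliary claim the first assertion follows by exploiting that $A$ and $B$ are linked. If $v \in N(A\cup B)$ is adjacent to $A$, the claim gives $F(B,A) \subseteq N(v)$; linkedness makes $F(B,A)$ non-empty and it lies in $B$, so $v$ is also adjacent to $B$, and the symmetric version of the claim (swap $A \leftrightarrow B$) yields $F(A,B) \subseteq N(v)$. The case $v \in N(B)$ is symmetric, and since every $v \in N(A\cup B)$ is adjacent to $A$ or to $B$, we obtain $F(A,B) \cup F(B,A) \subseteq N(v)$ for all of them.

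For the two ``therefore'' items I would assume $A \cup B \neq V(G)$ (otherwise $A$ and $\comp{A} = B$ are already complementary convex sets and the statement degenerates), so that $N(A\cup B) \neq \emptyset$ by connectedness of $G$. For item (1): a vertex of $N(A)$ either lies in $B$, hence in $F(B,A)$, or outside $A \cup B$, hence in $N(A\cup B)$, giving $N(A) \subseteq F(B,A) \cup N(A\cup B)$; conversely $F(B,A) \subseteq N(A)$ is immediate, and every $v \in N(A\cup B)$ is adjacent to the non-empty set $F(A,B) \subseteq A$ by the main claim, so $N(A\cup B) \subseteq N(A)$. For item (2): unwinding the frontier gives $F(A,\comp{A}) = F(A,B) \cup F(A,N(A\cup B))$, and the main claim shows $F(A,B) \subseteq F(A,N(A\cup B))$ (each vertex of $F(A,B)$ is adjacent to any fixed vertex of $N(A\cup B)$), whence the claimed equality. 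For the clique property I would apply Theorem \ref{thm:dourado-convex}: it suffices to show that $G - A$ is connected, for then $F(A,\comp{A})$ is the frontier of the single component of $G - A$ and hence a clique. Connectedness of $G - A$ follows because $B$ is connected and every component of $G - A$ must touch $A$; a vertex of such a component adjacent to $A$ lies in $B$ or in $N(A\cup B)$, and in both cases it belongs to the component of $B$ (directly, or via adjacency to $F(B,A) \subseteq B$ supplied by the main claim), so $\comp{A}$ is the unique component.
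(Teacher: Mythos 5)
Your proof is correct and follows essentially the same route as the paper's: a missing edge between a vertex $v \in N(A \cup B)$ and the frontier forces a chordless path threading through $A$ (resp.\ $B$), which places $v$ in the shadow $A/B$ and contradicts saturation, after which items (1) and (2) follow from Theorem \ref{thm:dourado-convex} applied to the unique connected component of $G - A$. The only differences are cosmetic: you re-derive the relevant direction of Proposition \ref{prop:convex} inline instead of citing it, and you are somewhat more explicit than the paper about linkedness guaranteeing $F(A,B), F(B,A) \neq \emptyset$ and about the degenerate case $A \cup B = V(G)$.
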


\begin{proof}
    Assume for contradiction there exists $v \in N(A \cup B)$ such that $F(A, B) \cup F(B, A) \nsubseteq N(v)$.
    We have two cases: 
    \begin{enumerate}[(1)]
        \item $F(A, B) \nsubseteq N(v)$ and $F(B, A) \nsubseteq N(v)$.
        Suppose w.l.o.g.~that $v \in N(A)$.
        There exists $b \in F(B, A)$ such that $b \notin N(v)$.
        Then, we deduce by Proposition \ref{prop:convex} that $\cl(bv) \cap A \neq \emptyset$ and $v \in A / B$.
        
        \item $F(A, B) \subseteq N(v)$ and $F(B, A) \nsubseteq N(v)$ (w.l.o.g.).
        Since $F(A, B) \subseteq N(v)$, $v \in N(A)$ holds.
        Thus, $v \in A / B$ again follows from Proposition \ref{prop:convex}.
    \end{enumerate}
    In both cases, we obtain $v \in A / B$ with $v \notin A$.
    This contradicts $A$ being saturated.
    We derive $F(A, B) \cup F(B, A) \subseteq N(v)$.
    Therefore, every $v \in N(A) \setminus B$ also lies in $N(B) \setminus A$ so that $N(A) \cap N(B) = N(A \cup B)$ holds along with $N(A) = F(B, A) \cup N(A \cup B)$ and $F(A, \comp{A}) = F(A, N(A \cup B))$.
    To see that $F(A, \comp{A})$ is a clique, observe that $B \cup N(A \cup B)$ is connected since $B$ is convex.
    We deduce that $B \cup N(A \cup B)$ is included in a connected component of $G - A$.
    Since $F(A, \comp{A}) = F(A, N(A \cup B))$ and $A$ is convex as it saturated, we obtain from Theorem \ref{thm:dourado-convex} that $F(A, \comp{A})$ is a clique.
    \end{proof}

Proposition \ref{prop:convex} and Lemma \ref{lem:neighbors-equal} have two consequences.
First, we can characterize $\mfs(A, B)$ as the set of pairs $uv$ the closure of which contains non-adjacent vertices of $N(A \cup B)$, or in other words, a forbidden pair within $N(A \cup B)$.

\begin{lemma}[restate=LEMMFS, label=lem:MFS] 
Let $G$ be a connected graph and let $A, B$ be two linked, disjoint and saturated subsets of $V(G)$.
The following equality holds:
\[ 
\mfs(A, B) = \{uv \subseteq \comp{A \cup B} \st \cl(uv) \cap N(A \cup B) \text{ is not a clique} \}
\]
In particular, $X \subseteq \comp{A \cup B}$ is forbidden if and only if it includes a forbidden pair of $\mfs(A, B)$.
\end{lemma}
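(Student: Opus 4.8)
The plan is to prove the displayed equality as a double inclusion, writing $M = \{uv \subseteq \comp{A \cup B} \st \cl(uv) \cap N(A \cup B) \text{ is not a clique}\}$ for its right-hand side, and to base both inclusions on a single core claim: \emph{for every pair $\{u,v\} \subseteq \comp{A \cup B}$ with $\cl(uv) \cap A \neq \emptyset$, the set $\cl(uv) \cap N(A \cup B)$ contains two non-adjacent vertices} (and symmetrically for $B$). A preliminary observation I would record is that any such $u, v$ are non-adjacent, for otherwise $\cl(uv) = \{u, v\} \subseteq \comp{A \cup B}$ could not meet $A$; the same remark shows that every $uv \in M$ has $u, v$ non-adjacent, which is what lets me apply Proposition~\ref{prop:convex}(1).

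For the core claim I would apply the only-if direction of Proposition~\ref{prop:convex}(1) with $C = A$: from $\cl(uv) \cap A \neq \emptyset$ I obtain non-adjacent vertices $p, q \in N(A) \cap \cl(uv)$. The crux is to force $p, q$ into $N(A \cup B)$. By Lemma~\ref{lem:neighbors-equal} we have $N(A) = F(B, A) \cup N(A \cup B)$, so the only way to fail would be to have, say, $p \in F(B, A) \subseteq B$. I would exclude this in two steps. First, $p$ and $q$ cannot both lie in $B$: the frontier $F(B, \comp{B})$ is a clique by Lemma~\ref{lem:neighbors-equal}(2) and it contains $F(B, A)$, so two vertices of $F(B, A)$ are adjacent, contradicting the non-adjacency of $p, q$. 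Hence $q \in N(A \cup B)$; but then Lemma~\ref{lem:neighbors-equal} gives $F(B, A) \subseteq N(q)$, so $p \in N(q)$, again contradicting non-adjacency. Therefore $p \in N(A \cup B)$, and by symmetry $q \in N(A \cup B)$, establishing the core claim.

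With the core claim available, the inclusion $\mfs(A, B) \subseteq M$ is the substantial one. Given a minimal forbidden set $X$, I first note $G$ is not a clique (a clique has $\cl(X) = X \subseteq \comp{A \cup B}$, so no forbidden sets exist), whence Theorem~\ref{thm:duchet-cara} gives Carathéodory number $2$; applied to some $a \in \cl(X) \cap A$ this yields a pair $\{u_1, v_1\} \subseteq X$ with $a \in \cl(u_1 v_1)$, so that $\cl(u_1 v_1) \cap A \neq \emptyset$. The core claim then shows $\cl(u_1 v_1) \cap N(A \cup B)$ is not a clique, so $u_1 v_1 \in M$; and a pair in $M$ is forbidden because its two non-adjacent vertices of $N(A \cup B)$ lie in both $N(A)$ and $N(B)$ (Lemma~\ref{lem:neighbors-equal}), so the if-direction of Proposition~\ref{prop:convex}(1) returns $\cl(u_1 v_1)$ meeting both $A$ and $B$. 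Since $u_1 v_1 \subseteq X$ is itself forbidden and $X$ is minimal, $X = \{u_1, v_1\} \in M$. The reverse inclusion $M \subseteq \mfs(A, B)$ reuses the same if-direction argument to see that every $uv \in M$ is forbidden, and every forbidden pair is automatically minimal (no singleton or empty set is forbidden), so $uv \in \mfs(A, B)$.

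The main obstacle is precisely the localisation of $p$ and $q$ in $N(A \cup B)$ rather than merely in $N(A)$: Proposition~\ref{prop:convex} only guarantees membership in $N(A)$, and $N(A)$ overlaps $B$ along $F(B, A)$, so one genuinely needs the saturation hypothesis, which enters through the strong domination property of Lemma~\ref{lem:neighbors-equal}---every vertex of $N(A \cup B)$ is adjacent to all of $F(A, B) \cup F(B, A)$---together with the clique structure of the frontiers. Once this localisation is secured, the remaining arguments are routine bookkeeping with the Carathéodory bound and with minimality, and the final ``in particular'' follows from the general fact that a set is forbidden exactly when it contains a minimal forbidden set, these being now identified with the pairs of $\mfs(A, B)$.
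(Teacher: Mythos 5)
Your proof is correct and follows essentially the same route as the paper: both directions rest on Proposition~\ref{prop:convex}(1), with the Carathéodory bound of Theorem~\ref{thm:duchet-cara} reducing a minimal forbidden set to a pair, and Lemma~\ref{lem:neighbors-equal} (frontier cliques plus the domination of $F(A,B)\cup F(B,A)$ by $N(A\cup B)$) forcing the two non-adjacent witnesses out of $F(B,A)$ and into $N(A\cup B)$. Your write-up merely makes explicit two steps the paper compresses --- the Carathéodory extraction of the pair and the two-case localisation argument --- but the underlying proof is the same.
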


\begin{proof}
We show double inclusion and start with the $\supseteq$ part.
Remind that $\cl(v) = \{v\}$ for all $v \in V$.
Now let $u, v \notin A \cup B$ such that $\cl(uv) \cap N(A \cup B)$ is not a clique.
By Proposition \ref{lem:neighbors-equal}, we have $N(A \cup B) \subseteq N(A)$ so that $\cl(uv) \cap A \neq \emptyset$ by Proposition \ref{prop:convex}.
Similarly, we obtain $\cl(uv) \cap B \neq \emptyset$.
Hence $uv \in \mfs(A, B)$.

We proceed to the $\subseteq$ part.
Let $X \in \mfs(A, B)$.
By definition of $\mfs(A, B)$ and by Proposition \ref{prop:convex}, there exists $u, v \in X$ such that $\cl(uv)$ contains non-adjacent vertices $u', v'$ of $N(A)$.
Since $B$ is convex and by Lemma \ref{lem:neighbors-equal}, we have $u', v' \notin F(B, A)$.
In other words, $u', v' \in N(B)$ holds too.
We deduce $\cl(u', v') \cap B \neq \emptyset$ by Proposition \ref{prop:convex}.
Henceforth, $\cl(uv) \cap A \neq \emptyset$ and $\cl(uv) \cap B \neq \emptyset$.
It follows $X = uv$.

For the last part, a set is forbidden if and only if it includes an inclusion-wise minimal forbidden set.
Since $\mfs(A, B)$ contains only pairs of vertices, this concludes the proof.
\end{proof}

As another consequence, we can describe $N(A \cup B)$ and its interactions with $A$ and $B$ depending on whether it is a clique or not.

\begin{lemma}[restate=LEMAlternative, label=lem:alternative] 
Let $G$ be a connected graph and let $A, B$ be two linked, disjoint and saturated subsets of $V(G)$.
Then either $N(A \cup B)$ is a clique or for every $u, v \in N(A \cup B)$, $F(A, v) = F(A, u)$ and $F(B, v) = F(B, u)$.
\end{lemma}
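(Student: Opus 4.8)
The plan is to prove the dichotomy by contraposition on the second alternative: assuming $N(A\cup B)$ is \emph{not} a clique, I would show that every pair of vertices of $N(A\cup B)$ has the same $A$-frontier, the statement for $B$-frontiers following by the symmetric argument (all hypotheses are symmetric in $A,B$). Throughout I would lean on the saturation of $A$ and $B$, first unpacked into three usable facts: $A/B=A$ and $B/A=B$; for every non-adjacent pair $s,t$ of $N(A\cup B)$---exactly the forbidden pairs by Lemma~\ref{lem:MFS}---the intersection law $\cl(A\cup s)\cap\cl(A\cup t)\subseteq A$ (from $\sigma(A,B)=A$); and, via Lemma~\ref{lem:neighbors-equal}, that $F(A,B)\cup F(B,A)\subseteq N(z)$ for every $z\in N(A\cup B)$, with $F(B,A)\neq\emptyset$ because $A,B$ are linked.

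The first real step is the \emph{non-adjacent} case: for non-adjacent $u,v\in N(A\cup B)$, I claim $F(A,u)=F(A,v)$. Assuming a witness $w\in F(A,u)\setminus F(A,v)$, I would note that $w\in A$, that $w$ is adjacent to $u$ but not $v$, and---crucially---that $w$ has no neighbour in $B$ (otherwise $w\in F(A,B)\subseteq N(v)$ would force $w\sim v$). Then for any $b\in F(B,A)$, Lemma~\ref{lem:neighbors-equal} makes $b$ adjacent to both $u$ and $v$, so $w,u,b,v$ is a chordless path: its only possible chords $wb$, $wv$, $uv$ are all absent, the last precisely because $u\not\sim v$. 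This places $u\in J[w,v]\subseteq\cl(A\cup v)$, and together with $u\in\cl(A\cup u)$ and the intersection law for the forbidden pair $\{u,v\}$ it forces $u\in A$, contradicting $u\in N(A\cup B)$.

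The second step leverages this structurally. Fixing a single $w\in A$, split $N(A\cup B)$ into $M=\{z:wz\in E(G)\}$ and $\comp{M}$; the non-adjacent case says two non-adjacent vertices always agree on adjacency to $w$, so no non-adjacent pair crosses $M$ and $\comp{M}$, i.e.\ every vertex of $M$ is adjacent to every vertex of $\comp{M}$. Supposing for contradiction that $N(A\cup B)$ is not a clique yet some $A$-frontiers differ, I would choose $w\in F(A,u_0)\setminus F(A,v_0)$, placing $u_0\in M$ and $v_0\in\comp{M}$ so both parts are non-empty, and locate a guaranteed non-adjacent pair $s,t$ lying wholly in $M$ or wholly in $\comp{M}$. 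If $s,t\in\comp{M}$, then $w\sim u_0$, $u_0\sim s$, $u_0\sim t$ and $w\not\sim s,t$, so $w,u_0,s$ and $w,u_0,t$ are chordless and $u_0\in\cl(A\cup s)\cap\cl(A\cup t)\subseteq A$, impossible. If $s,t\in M$, then using $v_0\in\comp{M}$ the chordless paths $w,s,v_0$ and $w,t,v_0$ put $s,t\in\cl(A\cup v_0)$, so the forbidden-pair hull $\cl(\{s,t\})$, which lies in $\cl(A\cup v_0)$, meets $B$, giving $v_0\in B/A=B$, again impossible.

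I expect the main obstacle to be exactly the passage from the non-adjacent case to \emph{all} pairs. The neat $w,u,b,v$ path only works when $u\not\sim v$, and for an adjacent pair with differing frontiers one cannot build a forbidden pair out of $u,v$ themselves. The way through is to stop trying to separate $u,v$ and instead exploit an \emph{external} non-adjacent pair together with the complete-bipartite structure of $M$ against $\comp{M}$; the subtle point is that the two placements of that pair must be contradicted by two \emph{different} saturation properties---the intersection law $\cl(A\cup s)\cap\cl(A\cup t)\subseteq A$ in one case, and the shadow identity $B/A=B$ in the other. The parts most deserving care will be justifying that no non-adjacent pair crosses $M/\comp{M}$ (hence the complete-bipartite claim) and verifying the chordlessness of the three short paths.
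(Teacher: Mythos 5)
Your proposal is correct, and it shares the paper's two-step skeleton---first prove equal frontiers for \emph{non-adjacent} pairs of $N(A\cup B)$, then extend to all pairs---but the extension step is carried out by a genuinely different argument. The paper fixes one non-adjacent pair $u,v$ and, for any further $w\in N(A\cup B)$ adjacent to both, compares $F(A,w)$ with $F(A,u)=F(A,v)$ directly via item (2) of Proposition~\ref{prop:convex}, closing each subcase with the intersection law or the shadow $B/A$. You never examine such a $w$ at all: you partition $N(A\cup B)$ by adjacency to a witness $w\in A$ chosen from a hypothetical frontier difference, use the non-adjacent case to force completeness between the two parts, and then derive a contradiction from wherever an external non-adjacent pair $s,t$ (guaranteed since $N(A\cup B)$ is not a clique) can sit---the intersection law $\cl(A\cup s)\cap\cl(A\cup t)\subseteq A$ in one placement, the identity $B/A=B$ in the other. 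Both routes consume exactly the same saturation facts and Lemmas~\ref{lem:MFS} and~\ref{lem:neighbors-equal}; yours trades the paper's vertex-by-vertex frontier comparison for a global adjacency-partition picture, and as a side benefit it replaces the paper's slightly loose appeal to Proposition~\ref{prop:convex} in the non-adjacent base case (item (2) is stated for adjacent pairs) with the explicit chordless path $w,u,b,v$, whose chordlessness you justify correctly ($wb\notin E(G)$ because $w\in F(A,B)$ would force $w$ adjacent to $v$). The remaining chordlessness checks for the induced paths $w,u_0,s$ and $w,s,v_0$ are likewise sound, so the argument goes through.
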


\begin{proof}
Suppose that $N(A \cup B)$ is not a clique and let $u, v$ be two non-adjacent vertices of $N(A \cup B)$.
We first prove that $F(A, v) = F(A, u)$ and $F(B, v) = F(B, u)$.
Assume for contradiction that $F(A, v) \neq F(A, u)$.
We have $F(A, v) \setminus F(A, u) \neq \emptyset$ or $F(A, u) \setminus F(A, v) \neq \emptyset$.
By Proposition \ref{prop:convex}, we deduce $v \in \cl(A \cup u)$ or $u \in \cl(A \cup v)$.
Since $u, v$ are not adjacent, $uv \in \mfs(A, B)$ by Lemma \ref{lem:MFS} and we obtain $\cl(A \cup u) \cap B \neq \emptyset$ or $\cl(A \cup v) \cap B \neq \emptyset$.
Thus, either $u \in A /B$ or $v \in A / B$.
This contradicts $A$ being saturated.
We obtain $F(A, v) = F(A, u)$, and $F(B, v) = F(B, u)$ using the same argument on $B$.

Now, let $w \in N(A \cup B)$ such that $w \neq u, v$.
If $w$ is not adjacent to $u$ or $v$, then $F(A, w) = F(A, u) = F(A, v)$ and $F(B, w) = F(B, u) = F(B, v)$ readily holds by previous argument. 
Therefore, suppose that $w$ is adjacent to both $u$ and $v$.
We prove that: (1) $F(A, w) \setminus F(A, u) = \emptyset$ and (2) $F(A, u) \setminus F(A, w) = \emptyset$.
\begin{enumerate}[(1)]
\item Assume for contradiction that $F(A, w) \setminus F(A, u) \neq \emptyset$.
Then, $w \in \cl(A \cup u)$ by Proposition \ref{prop:convex}.
But since, $F(A, u) = F(A, v)$, we deduce $F(A, w) \setminus F(A, v) \neq \emptyset$ and hence $w \in \cl(A \cup v)$.
Because $uv \in \mfs(A, B)$ and $w \in \cl(A \cup u) \cap \cl(A \cup v)$, $w \notin A$ is a contradiction with $A$ being saturated.
We deduce that $F(A, w) \setminus F(A, u) = \emptyset$ must hold.

\item Again, suppose for contradiction that $F(A, u) \setminus F(A, w) \neq \emptyset$.
By Proposition 5, we obtain $u \in \cl(A \cup w)$ and since $F(A, u) = F(A, v)$, $v \in \cl(A \cup w)$ also holds.
Since $uv \in \mfs(A, B)$, we obtain $w \in B / A$, a contradiction with $B$ being saturated.
\end{enumerate}
We conclude that $F(A, w) = F(A, u)$ holds, and similarly $F(B, w) = F(B, u)$.
This concludes the proof.
\end{proof}

The two situations obtained from Lemma \ref{lem:alternative} are illustrated in Figure \ref{fig:alternative}.
\begin{figure}[ht!]
\centering
\includegraphics[scale=\FIGAlternative]{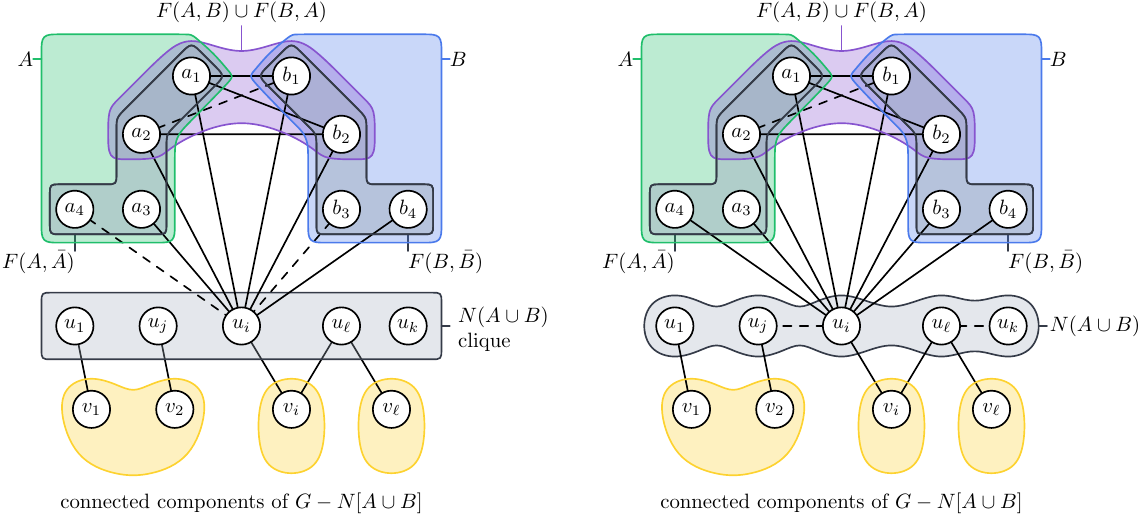}
\caption{The two possible situations of Lemma \ref{lem:alternative}.
On the left, $N(A \cup B)$ is a clique. Each vertex of $N(A \cup B)$, is connected to each vertex of $F(A, B) \cup F(B, A)$ (circled in purple), modelled by $u_i$. However, it needs not be adjacent to all the vertices of the cliques $F(A, \bar A)$ and $F(B, \bar B)$ (the dotted line $u_i a_4$ indicates a non-edge).
On the right, $N(A \cup B)$ is not a clique (for instance, $u_i, u_j$ are not adjacent).
Each vertex of $N(A \cup B)$ is complete to $F(A, \bar A) \cup F(B, \bar B)$, including $F(A, B) \cup F(B, A)$.}
\label{fig:alternative}
\end{figure}
In the case where $N(A \cup B)$ is not a clique, Lemma \ref{lem:alternative} together with Lemma \ref{lem:neighbors-equal} yields the subsequent corollary that will be useful later on.

\begin{corollary}[restate=CORCentralClique, label=cor:central-clique] 
If $N(A \cup B)$ is not a clique, then for every clique $K \subseteq N(A \cup B)$, $F(A, \comp{A}) \cup K$ (resp.~$F(B, \comp{B}) \cup K$) is a clique. 
\end{corollary}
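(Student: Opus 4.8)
The plan is to verify the two defining properties of a clique for $F(A, \comp{A}) \cup K$: that each of $F(A, \comp{A})$ and $K$ is internally a clique, and that every vertex of $F(A, \comp{A})$ is adjacent to every vertex of $K$. The first property is immediate, since $K$ is a clique by hypothesis and $F(A, \comp{A})$ is a clique by Lemma \ref{lem:neighbors-equal}(2). So the entire content lies in establishing the cross-adjacencies between $F(A, \comp{A})$ and $K$, and this is where I would invoke the hypothesis that $N(A \cup B)$ is not a clique.

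First I would unwind the second description of $F(A, \comp{A})$ supplied by Lemma \ref{lem:neighbors-equal}(2), namely $F(A, \comp{A}) = F(A, N(A \cup B))$. Since $A$ and $N(A \cup B)$ are disjoint, this equals $\bigcup_{w \in N(A \cup B)} F(A, w)$, the set of vertices of $A$ having a neighbor in $N(A \cup B)$. Because $N(A \cup B)$ is not a clique, Lemma \ref{lem:alternative} applies and gives $F(A, u) = F(A, v)$ for all $u, v \in N(A \cup B)$. Hence all the sets $F(A, w)$ in this union coincide, and for every single $k \in N(A \cup B)$ we obtain the clean identity $F(A, \comp{A}) = F(A, k)$.

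With this identity in hand, the cross-adjacencies follow directly. Fix $a \in F(A, \comp{A})$ and $k \in K \subseteq N(A \cup B)$. From $F(A, \comp{A}) = F(A, k)$ we get $a \in F(A, k) = A \cap N(k)$, the last equality holding because $k \notin A$; therefore $ak \in E(G)$. As $a$ and $k$ were arbitrary, $F(A, \comp{A})$ is completely joined to $K$, and combined with both sets being cliques this shows $F(A, \comp{A}) \cup K$ is a clique. The argument for $F(B, \comp{B}) \cup K$ is identical after exchanging the roles of $A$ and $B$, which is legitimate since the hypotheses on $A, B$ and the conclusions of Lemmas \ref{lem:neighbors-equal} and \ref{lem:alternative} are symmetric.

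I do not anticipate a genuine obstacle here: the statement is essentially a bookkeeping consequence of the two preceding lemmas. The one point requiring care is the passage $F(A, \comp{A}) = F(A, k)$: it is valid only because $N(A \cup B)$ is not a clique, so that Lemma \ref{lem:alternative} forces all the frontiers $F(A, w)$ to be equal, and because $N(A \cup B)$ is nonempty --- which is itself guaranteed by the non-clique hypothesis, since a non-clique set must contain at least two (non-adjacent) vertices.
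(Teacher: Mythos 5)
Your proof is correct and follows essentially the same route as the paper's: both establish that $F(A,\comp{A})$ is a clique via Lemma \ref{lem:neighbors-equal} and derive the cross-adjacencies to $K$ from the frontier-equality $F(A,u)=F(A,v)$ of Lemma \ref{lem:alternative}. The only cosmetic difference is that the paper splits the cross-adjacency check into $F(A,B)$ (handled directly by Lemma \ref{lem:neighbors-equal}) and $F(A,\comp{A})\setminus F(A,B)$, whereas you absorb both cases at once through the identity $F(A,\comp{A})=F(A,k)$.
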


\begin{proof}
The fact that $F(A, \comp{A})$ is a clique comes from Lemma \ref{lem:neighbors-equal}.
We show that $F(A, \comp{A}) \subseteq N(v)$ for every $v \in N(A \cup B)$.
First, $F(A, B) \subseteq N(v)$ holds by Lemma \ref{lem:neighbors-equal}.
Then, for a given $u \in F(A, \comp{A}) \setminus F(A, B)$, there exists by definition some $w \in N(A) \setminus B = N(A \cup B)$ such that $v \in F(A, w)$.
Because $N(A \cup B)$ is not a clique by hypothesis, $F(A, w) = F(A, v)$ by Lemma \ref{lem:alternative}.
We deduce $F(A, \comp{A}) \subseteq N(v)$ for all $v \in N(A \cup B)$.
The fact that $N(A \cup B) \subseteq N(v)$ for each $v \in F(A, \comp{A})$ follows.
Henceforth, $F(A, \comp{A}) \cup K$ is a clique.
\end{proof}

Thanks to Lemmas \ref{lem:MFS} and \ref{lem:alternative}, we are in position to relate the separability of $A, B$ with (co)bipartiteness.
We first address the case where all the vertices left to assign lie in $N(A \cup B)$, i.e., when $\comp{A \cup B} = N(A \cup B)$.
Although restricted, this case gives some insights for the general one. 

If $N(A \cup B)$ is a clique, then $\mfs(A, B) = \emptyset$ by Lemma \ref{lem:MFS}.
Hence every bipartition $X, Y$ of $N(A \cup B)$ readily satisfies $\cl(A \cup X) \cap B = \emptyset$ and $\cl(B \cup Y) \cap A = \emptyset$.
Therefore, $X$ and $Y$ need only satisfy $\cl(A \cup X) \cap Y = \emptyset$ and $\cl(B \cup X) \cap Y = \emptyset$.
The trivial bipartition $X = \emptyset$ and $Y = N(A \cup B)$ vacuously obeys this requirement.

\begin{remark}
If $N(A \cup B)$ is a clique, even though $\mfs(A, B) = \emptyset$, some bipartitions of $N(A \cup B)$ will not define a correct half-space separation of $A, B$.
This is illustrated in Figure \ref{fig:clique}.
\begin{figure}[ht!]
\centering
\includegraphics[scale=\FIGClique]{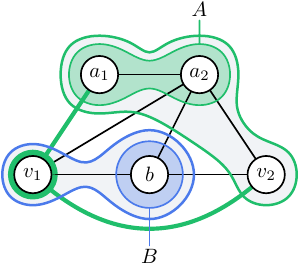}
\caption{A graph $G$ in which $N(A \cup B)$ is a clique, yet where the highlighted bipartition $A \cup v_2, B \cup v_1$ does not define half-spaces.
Since $F(A, v_1) \setminus F(A, v_2) \neq \emptyset$, $v_1 \in \cl(A \cup v_2$) holds by Proposition \ref{prop:convex} (an $a_1v_2$-path is highlighted in bold green).}
\label{fig:clique}
\end{figure}
In fact, unlike the case where $N(A \cup B)$ is not a clique, $F(A, v) = F(A, u)$ needs not hold for distinct vertices $u, v \in N(A \cup B)$.
Thus, Proposition \ref{prop:convex} may entail $u \in \cl(A \cup v)$ or vice-versa.
\end{remark}

On the other hand, when $N(A \cup B)$ is not a clique, the subsequent lemma implies that for any bipartition $X$, $Y$ of $N(A \cup B)$ into cliques, $A \cup X$ and $B \cup Y$ are convex.

\begin{lemma}[restate=LEMConvexClique, label=lem:convex-clique-1] 
Let $G$ be a connected graph and let $A, B$ be two linked, disjoint and saturated subsets of $V(G)$ such that $N(A \cup B)$ is not a clique.
Then for every clique $K \subseteq N(A \cup B)$, both $A \cup K$ and $B \cup K$ are convex.
\end{lemma}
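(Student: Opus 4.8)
The plan is to invoke Theorem \ref{thm:dourado-convex}: since $G$ is connected, $A \cup K$ is convex if and only if $F(A \cup K, S)$ is a clique for every connected component $S$ of $G - (A \cup K)$. Thus the whole argument reduces to controlling this frontier for an arbitrary component $S$, and the case of $B \cup K$ will follow by exchanging the roles of $A$ and $B$. I would not even need to enumerate the components explicitly, since the bound I aim for will hold uniformly for all of them.

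First I would record two immediate facts. Since $K \subseteq N(A \cup B) \subseteq \comp{A \cup B}$, the set $K$ is disjoint from $A$, so the decomposition of the frontier below is clean. Moreover, any component $S$ of $G - (A \cup K)$ satisfies $S \subseteq \comp{A \cup K} \subseteq \comp{A}$. Because $A \cup K$ and $S$ are disjoint, $F(A \cup K, S) = (A \cup K) \cap N(S) = (A \cap N(S)) \cup (K \cap N(S))$. The contribution from $K$ is trivially contained in $K$; for the contribution from $A$, any $a \in A$ adjacent to a vertex of $S \subseteq \comp{A}$ has a neighbor in $\comp{A}$ and hence lies in $F(A, \comp{A})$. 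This yields the uniform containment $F(A \cup K, S) \subseteq F(A, \comp{A}) \cup K$ for every component $S$ at once.

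The crux is then supplied by Corollary \ref{cor:central-clique}: since $N(A \cup B)$ is not a clique by hypothesis and $K \subseteq N(A \cup B)$ is a clique, $F(A, \comp{A}) \cup K$ is a clique. As every subset of a clique is a clique, $F(A \cup K, S)$ is a clique for each $S$, and Theorem \ref{thm:dourado-convex} gives that $A \cup K$ is convex. Applying the same reasoning with $A$ and $B$ interchanged, using the $B$-version of Corollary \ref{cor:central-clique}, shows that $B \cup K$ is convex as well.

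I do not expect a serious obstacle: the genuinely delicate work has already been carried out in Lemmas \ref{lem:neighbors-equal} and \ref{lem:alternative} and packaged into Corollary \ref{cor:central-clique}, which is exactly the hypothesis that $N(A \cup B)$ is not a clique being spent. The only points requiring care are the disjointness of $K$ from $A$ (so that splitting the frontier into an $A$-part and a $K$-part is legitimate) and the observation that a component of $G - (A \cup K)$ meets $A$ only through $F(A, \comp{A})$; both are immediate from the definitions once one notes that such components avoid $A$ entirely.
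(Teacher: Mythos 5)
Your proof is correct, but it follows a genuinely different route from the paper's. The paper verifies convexity directly from the definition: it takes $u \in A$, $v \in K$, supposes a chordless $uv$-path escapes $A \cup K$, picks the first escaping vertex $v_i$, and uses saturation plus Lemma \ref{lem:alternative} (namely $F(A, v_i) = F(A, v)$ when $N(A \cup B)$ is not a clique) to manufacture a chord $v_{i-1}v$, a contradiction. You instead route everything through Theorem \ref{thm:dourado-convex}: the frontier of $A \cup K$ toward any component $S$ of $G - (A \cup K)$ splits as $(A \cap N(S)) \cup (K \cap N(S)) \subseteq F(A, \comp{A}) \cup K$, and Corollary \ref{cor:central-clique} says this last set is a clique, so every such frontier is a clique and Dourado's characterization applies. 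Each step checks out: $K$ is disjoint from $A$ because $N(A \cup B) \subseteq \comp{A \cup B}$, components of $G - (A \cup K)$ avoid $A$, and Corollary \ref{cor:central-clique} is proved before this lemma so there is no circularity. The two arguments ultimately spend the same hypothesis (Lemma \ref{lem:alternative}, which you access through Corollary \ref{cor:central-clique}), but yours is shorter and has the aesthetic advantage of anticipating the technique the paper uses for the generalization in Lemma \ref{lem:convex-clique-2}, where convexity is again deduced from a clique-separator-style argument (via Lemma \ref{lem:gonzales-separator}) rather than from chordless paths; the paper's path argument is more self-contained in that it needs only the definition of monophonic convexity rather than the characterization of Theorem \ref{thm:dourado-convex}.
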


\begin{proof}
To check that $A \cup K$ is convex, we verify that $J[u, v] \subseteq A \cup K$ for every $u, v \in A \cup K$.
If $u, v \in A$ or $u, v \in K$, then the result holds since $A$ is convex and $K$ is a clique.
Consider instead $u \in A, v \in K$.
Assume for contradiction $J[u, v] \nsubseteq K \cup A$.
There exists a chordless $uv$-path $u = v_1, \dots, v_k = v$ such that $v_i \notin K \cup A$ for some $1 < i < k$.
Consider the least such $i$.
By assumption $v_i \in N(A)$ and $v_{i-1} \in F(A, v_i)$.
Morever, since $A, B$ are saturated, $v_i \in N(A \cup B)$ must hold.
As $N(A \cup B)$ is not a clique, we obtain by Lemma \ref{lem:alternative} that $F(A, v_i) = F(A, v)$, meaning that $v_{i - 1}$ is adjacent to $v$.
This contradicts $v_i$ being on a chordless $uv$-path.
We deduce that $J[u, v] \subseteq A \cup K$ and $A \cup K$ is convex.
\end{proof}

We finally arrive at the following intermediate claim.

\begin{lemma}[restate=LEMIntermediateCobip, label=lem:intermediate-cobip] 
Let $G$ be a connected graph and let $A, B$ be two disjoint, linked and saturated subsets of $V(G)$.
If $\comp{A \cup B} = N(A \cup B)$, then $A$ and $B$ are separable if and only if $N(A \cup B)$ is cobipartite.
\end{lemma}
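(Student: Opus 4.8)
The plan is to exploit the hypothesis $\comp{A \cup B} = N(A \cup B)$, which means that separating $A$ and $B$ amounts exactly to distributing the vertices of $N(A \cup B)$ between two half-spaces. Recall that $N(A \cup B)$ is cobipartite precisely when it admits a partition into two cliques. Given any half-space separation $H, \comp{H}$ with $A \subseteq H$ and $B \subseteq \comp{H}$, I would set $X = H \cap N(A \cup B)$ and $Y = \comp{H} \cap N(A \cup B)$; since $\comp{A \cup B} = N(A \cup B)$, the pair $X, Y$ is a bipartition of $N(A \cup B)$, and $V(G) = A \cup B \cup X \cup Y$ with the four sets pairwise disjoint. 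The whole proof then reduces to matching up such bipartitions into half-spaces with bipartitions into cliques, the key inputs being Lemma \ref{lem:MFS}, Lemma \ref{lem:convex-clique-1}, and the structural facts of Lemma \ref{lem:neighbors-equal}.

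For the direction \emph{separable $\Rightarrow$ cobipartite}, I would show that the sets $X$ and $Y$ above must each be cliques. Suppose $u, v \in X$ are non-adjacent. Since $u, v \in N(A \cup B)$, the set $\cl(uv) \cap N(A \cup B)$ contains the non-adjacent pair $u, v$ and is therefore not a clique, so $uv \in \mfs(A, B)$ by Lemma \ref{lem:MFS}. By definition of a forbidden pair this gives $\cl(uv) \cap B \neq \emptyset$; but $u, v \in X \subseteq H$ and $H$ is convex, so $\cl(uv) \subseteq H$, while $B \subseteq \comp{H}$ forces $\cl(uv) \cap B = \emptyset$, a contradiction. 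The same argument shows $Y$ is a clique, so $N(A \cup B) = X \cup Y$ is cobipartite. Notably, this direction is uniform and does not require distinguishing whether $N(A \cup B)$ is itself a clique.

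For the converse \emph{cobipartite $\Rightarrow$ separable}, I would split on whether $N(A \cup B)$ is a clique. If it is not, then writing $N(A \cup B) = X \cup Y$ with $X, Y$ disjoint cliques, Lemma \ref{lem:convex-clique-1} gives that both $A \cup X$ and $B \cup Y$ are convex; as the four blocks partition $V(G)$, these two sets are complementary, hence form a half-space separation with $A \subseteq A \cup X$ and $B \subseteq B \cup Y$. The remaining case is that $N(A \cup B)$ is a clique, which is automatically cobipartite; here I would argue directly that $A$ and $\comp{A} = B \cup N(A \cup B)$ are complementary half-spaces. Since $A$ is convex, it suffices to prove $\comp{A}$ convex, and by Theorem \ref{thm:dourado-convex} (with $G[A]$ connected by Observation \ref{obs:connected}, giving a single component) this reduces to showing that $F(\comp{A}, A) = N(A)$ is a clique. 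By Lemma \ref{lem:neighbors-equal} we have $N(A) = F(B, A) \cup N(A \cup B)$; the part $N(A \cup B)$ is a clique by the case assumption, the part $F(B, A)$ is a clique because it is contained in the frontier $F(B, S)$ of the component $S$ of $G - B$ containing $A$ (a clique by Theorem \ref{thm:dourado-convex} applied to the convex set $B$), and $F(B, A)$ is complete to $N(A \cup B)$ again by Lemma \ref{lem:neighbors-equal}. Hence $N(A)$ is a clique and $\comp{A}$ is convex.

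The main obstacle I anticipate is precisely this degenerate case where $N(A \cup B)$ is a clique: Lemma \ref{lem:convex-clique-1} explicitly excludes it, so I cannot reuse it and must instead give a self-contained proof that $A$ is a half-space, which is exactly where combining Theorem \ref{thm:dourado-convex} with Lemma \ref{lem:neighbors-equal} to certify that $N(A)$ is a clique does the work. The forbidden-pair direction, by contrast, should be short once Lemma \ref{lem:MFS} is available.
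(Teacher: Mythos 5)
Your proof is correct and follows essentially the same route as the paper: the forward direction extracts a forbidden pair from any non-adjacent pair inside one half-space via Lemma~\ref{lem:MFS} and convexity of $H$, and the converse splits on whether $N(A \cup B)$ is a clique, invoking Lemma~\ref{lem:convex-clique-1} in the non-clique case. The one place you go beyond the paper is the degenerate clique case, which the paper merely asserts; your argument there (reducing to $N(A) = F(B,A) \cup N(A \cup B)$ being a clique via Theorem~\ref{thm:dourado-convex}, Observation~\ref{obs:connected} and Lemma~\ref{lem:neighbors-equal}) is a valid and welcome completion of that step.
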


\begin{proof}
We start with the only if part.
Let $H = A \cup X$, $\comp{H} = B \cup Y$ be half-spaces separating $A$ and $B$.
By assumption, $X$ contains no forbidden pair of $\mfs(A, B)$.
Since $X \subseteq N(A \cup B)$, we deduce from Lemma \ref{lem:MFS} that $X$ is a clique.
In the same way, we deduce that $Y$ is a clique.
As $X, Y$ is a bipartition of $\comp{A \cup B} = N(A \cup B)$, we deduce that $N(A \cup B)$ is cobipartite.

We proceed to the if part.
If $N(A \cup B)$ is cobipartite, we have two cases: either $N(A \cup B)$ is a clique or it is not.
If $N(A \cup B)$ is a clique, then (resp.~$A \cup N(A \cup B)$ and $B$) are half-spaces separating $A$ and $B$.
If $N(A \cup B)$ is not a clique, the fact that $A \cup X$ and $B \cup Y$ are half-spaces for every bipartition $X, Y$ of $N(A \cup B)$ into cliques follows from Lemma \ref{lem:convex-clique-1}.
\end{proof}

Let us consider now that there are vertices outside of $N(A \cup B)$, i.e., $N(A \cup B) \subset \comp{A \cup B}$.
First, if $N(A \cup B)$ is a clique, $\mfs(A, B) = \emptyset$ still holds by Lemma \ref{lem:MFS}.
In this case, the same reasoning as before applies, and $A, B \cup \comp{A \cup B}$ is a half-space separation of $A$, $B$.
Suppose on the other hand that $N(A \cup B)$ is not a clique.
If it is not cobipartite, then any bipartition of $N(A \cup B)$ will contain a pair of non-adjacent vertices, and hence a forbidden pair, again due to Lemma \ref{lem:MFS}.
In other words, if $N(A \cup B)$ is not cobipartite, $A$ and $B$ are not separable.
However, there are also cases where $N(A \cup B)$ is cobipartite, yet $A$ and $B$ are not separable.
This is the case for the graphs of Figure \ref{fig:cobip-not-sufficient}, that we will use to illustrate the steps of the upcoming discussion.
\begin{figure}[ht!]
\centering
\includegraphics[page=1, scale=\FIGCobipNotSuf]{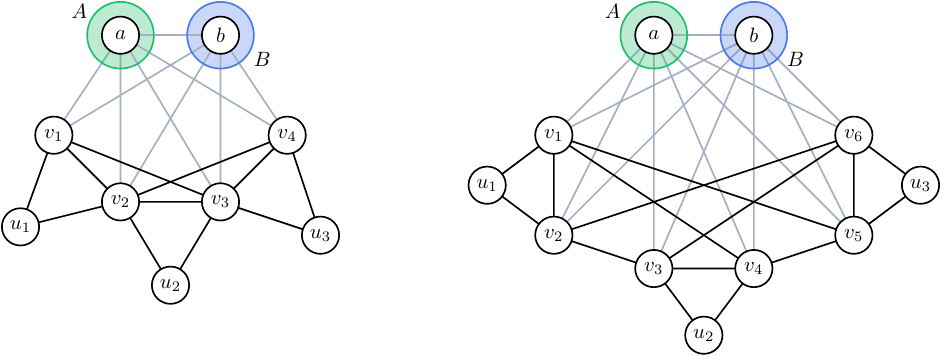}
\caption{Two examples where $A$ and $B$ are linked and saturated, yet not separable despite $N(A \cup B)$ being cobipartite. For readability, the edges incident to $a$ and $b$ are clearer. Remark that since $N(A \cup B)$ is not a clique, both $a$ and $b$ are complete to $N(A \cup B)$ in virtue of Lemma \ref{lem:alternative}.}
\label{fig:cobip-not-sufficient}
\end{figure}
This happens because when picking an element $v$ in a connected component $S$ of $G - N[A \cup B]$, $\cl(A \cup v)$ and $\cl(B \cup v)$ will share elements from $N(S)$, regardless of the structure of $N(A \cup B)$ (clique or not).

\begin{lemma}[restate=LEMConComp, label=lem:connected-components] 
Let $G$ be a connected graph and let $A, B$ be two linked, disjoint and saturated subsets of $V(G)$.
Let $S$ be a connected component of $G - N[A \cup B]$.
Then, for every $v \in S$, $N(S) \subseteq \cl(A \cup v) \cap \cl(B \cup v) \cap N(A \cup B)$.
\end{lemma}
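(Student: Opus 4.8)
The plan is to prove the three containments in $N(S) \subseteq \cl(A \cup v) \cap \cl(B \cup v) \cap N(A \cup B)$ separately, handling $\cl(A \cup v)$ and $\cl(B \cup v)$ by a symmetric argument. First I would dispose of the membership in $N(A \cup B)$, which is purely combinatorial. Any $w \in N(S)$ lies outside $S$ but is adjacent to some $s \in S$. Were $w$ itself outside $N[A \cup B]$, it would lie in the same connected component of $G - N[A \cup B]$ as $s$, hence in $S$, a contradiction; thus $w \in N[A \cup B] = (A \cup B) \cup N(A \cup B)$. Moreover $w \notin A \cup B$, since $w \in A$ (or $w \in B$) would place its neighbor $s \in S$ in $N(A \cup B)$, contradicting $s \notin N[A \cup B]$. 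Therefore $w \in N(A \cup B)$.

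The heart of the argument is showing $w \in \cl(A \cup v)$ for each $v \in S$. The key structural fact I would record first is that \emph{no vertex of $A \cup B$ has a neighbor in $S$}: every neighbor of a vertex of $A$ lies in $A \cup N(A) \subseteq N[A \cup B]$, which is disjoint from $S$ (and symmetrically for $B$). Next, by Lemma \ref{lem:neighbors-equal}, since $A, B$ are linked the sets $F(A, B)$ and $F(B, A)$ are non-empty and every $w \in N(A \cup B)$ is adjacent to all of $F(A, B) \cup F(B, A)$; in particular $w$ has a neighbor $a \in A$ and a neighbor $b \in B$.

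With $a \in A \cap N(w)$ in hand, I would build an explicit chordless $av$-path through $w$. Because $w$ is adjacent to some $s \in S$ and $G[S]$ is connected, the induced subgraph $G[\{w\} \cup S]$ is connected, so it contains a chordless path $w = u_0, u_1, \dots, u_m = v$ with $u_1, \dots, u_m \in S$. Prepending $a$ yields $a, w, u_1, \dots, u_m = v$, which is chordless: the segment $w, u_1, \dots, u_m$ is chordless by construction, and $a$ is adjacent to $w$ but to none of the interior vertices $u_i \in S$, since $a$ has no neighbor in $S$. Hence $w$ lies on a chordless $av$-path, so $w \in J[a, v] \subseteq \cl(A \cup v)$. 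Replacing $a$ by the neighbor $b \in B$ along the very same $S$-path gives $w \in J[b, v] \subseteq \cl(B \cup v)$, which completes all three containments.

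The main obstacle, and the step demanding the most care, is verifying that the concatenated path $a, w, u_1, \dots, u_m$ is genuinely chordless; everything hinges on the isolation of $S$ from $A \cup B$, namely that $a$ is non-adjacent to every interior $S$-vertex. This in turn rests on saturation, via Lemma \ref{lem:neighbors-equal}, forcing $N(A) \subseteq N[A \cup B]$. Once this separation property is secured, the remaining chord checks reduce to routine verifications on adjacencies.
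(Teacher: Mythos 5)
Your proof is correct and follows essentially the same route as the paper's: establish $N(S)\subseteq N(A\cup B)$, use Lemma \ref{lem:neighbors-equal} to get a neighbor of $w$ in $F(A,B)$ (resp.\ $F(B,A)$), and prepend it to a chordless $wv$-path inside $\{w\}\cup S$, using that vertices of $A\cup B$ have no neighbors in $S$. If anything you are slightly more careful than the paper, which only notes non-adjacency of $a$ to the endpoint $v$ rather than to all interior $S$-vertices; the only small inaccuracy is attributing $N(A)\subseteq N[A\cup B]$ to saturation, when it holds by definition.
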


\begin{proof}
Let $u \in N(S)$.
Because $S$ is a connected component of $G - N[A \cup B]$, $N(S) \subseteq N(A \cup B)$ holds by definition.
Thus, $u \in N(A \cup B)$.
Moreover, $S \cup u$ is connected in $G$.
Hence, there exists a chordless $uv$-path $u = v_1, \dots, v_k = v$.
Now, $F(A, B) \subseteq N(u) \setminus N(v)$ follows from Lemma \ref{lem:neighbors-equal} and the fact that $N(v) \cap A = \emptyset$, since $v \in S$.
Hence, for any $a \in F(A, B)$, $a, u, \dots, v_k = v$ is a chordless $av$-path.
We deduce $u \in \cl(A \cup v)$ for all $u \in N(S)$.
Since $A, B$ are linked and saturated, applying the same argument on $B$ yields $N(S) \subseteq \cl(A \cup v) \cap \cl(B \cup v) \cap N(A \cup B)$ as required.
\end{proof}

Using Lemma \ref{lem:connected-components}, we define an equivalence relation on $\comp{A \cup B}$ that will help us characterize the separability of $A$ and $B$.
Every half-space separation $H$, $\comp{H}$ of $A$ and $B$, if any, can be written as $H = A \cup X$ and $\comp{H} = B \cup Y$ where $X, Y$ is a bipartition of $\comp{A \cup B}$.
Since $H \cap \bar{H} = \emptyset$, we have $H \cap Y = \cl(A \cup X) \cap Y = \emptyset$ and similarly $\comp{H} \cap X = \cl(B \cup Y) \cap X = \emptyset$.
As a direct application of Lemma \ref{lem:connected-components}, we deduce:
\begin{enumerate}[(1)]
    \item For each connected component $S$ of $G - N[A \cup B]$, either $N[S] \subseteq X$ or $N[S] \subseteq Y$;
    \item If $S_1, \dots, S_k$ is a sequence of (not necessarily distinct) connected components of $G - N[A \cup B]$ such that $N(S_i) \cap N(S_{i+1}) \neq \emptyset$ for each $1 \leq i < k$, then $\bigcup_{i = 1}^k N[S_i]$ must be included in one of $X$ or $Y$.
    We call such a sequence an intersecting sequence of connected components.
\end{enumerate}
Given an intersecting sequence $S_1, \dots, S_k$ of connected components of $G - N[A \cup B]$, we say for brevity that $u, v$ belongs to the sequence $S_1, \dots, S_k$ if there exists $ 1 \leq i, j \leq k$ such that $u \in N[S_i]$ and $v \in N[S_j]$.  
Let us define the equivalence relation $\eq$ on $\comp{A \cup B}$ such that, for all $u, v \in \comp{A \cup B}$:
\begin{align*}
u \eq v \iff & u = v \text{ or } u, v \text{ belong to an intersecting sequence of } \\
& \text{connected components of } G - N[A \cup B]
\end{align*}

\begin{proposition}[restate=PROPEquivRel, label=prop:equivalence-relation] 
The relation $\eq$ is an equivalence relation.
\end{proposition}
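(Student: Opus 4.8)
The plan is to verify the three defining properties of an equivalence relation in turn, the first two being immediate from the definition while transitivity carries all the content. Reflexivity is built into the definition through the disjunct $u = v$, so $u \eq u$ holds for every $u \in \comp{A \cup B}$. For symmetry, I would observe that the auxiliary notion ``$u, v$ belong to an intersecting sequence $S_1, \dots, S_k$'' is literally symmetric in $u$ and $v$: it asks for indices $i, j$ with $u \in N[S_i]$ and $v \in N[S_j]$, and swapping the roles of $i$ and $j$ witnesses the same property for $v, u$. (Equivalently, one may reverse the sequence, which preserves the condition $N(S_i) \cap N(S_{i+1}) \neq \emptyset$.) Hence $u \eq v$ implies $v \eq u$.

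The heart of the argument is transitivity: assuming $u \eq v$ and $v \eq w$, I want to produce a single intersecting sequence witnessing $u \eq w$. After discarding the trivial cases in which two of $u, v, w$ coincide, I would take an intersecting sequence $\mathcal{S} = S_1, \dots, S_k$ for $u \eq v$ and, by truncating to the indices lying between the witnesses for $u$ and $v$ and reversing if necessary, arrange that $u \in N[S_1]$ and $v \in N[S_k]$. Symmetrically, I would take $\mathcal{T} = T_1, \dots, T_m$ for $v \eq w$ with $v \in N[T_1]$ and $w \in N[T_m]$. The goal is then to concatenate $\mathcal{S}$ and $\mathcal{T}$ into one intersecting sequence in which $u$ touches the first component and $w$ the last.

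The key observation that makes the concatenation work is a case analysis on where $v$ sits. Since $v \in \comp{A \cup B}$, either $v$ lies in a connected component of $G - N[A \cup B]$, or $v \in N(A \cup B)$, and these alternatives are mutually exclusive because $N(S) \subseteq N(A \cup B)$ for every component $S$. If $v$ is interior to a component, then that component is the \emph{unique} one whose closed neighborhood contains $v$, which forces $S_k = T_1$; merging this shared component yields the intersecting sequence $S_1, \dots, S_k = T_1, \dots, T_m$ (the required consecutive intersections all carry over from $\mathcal{S}$ and $\mathcal{T}$). If instead $v \in N(A \cup B)$, then $v \in N(S_k) \cap N(T_1)$, so $N(S_k) \cap N(T_1) \neq \emptyset$ and the plain concatenation $S_1, \dots, S_k, T_1, \dots, T_m$ is already intersecting. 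In either case the resulting sequence has $u \in N[S_1]$ and $w \in N[T_m]$, giving $u \eq w$.

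The main obstacle is precisely establishing and exploiting this dichotomy for $v$: I expect the routine part to be the gluing itself, and the delicate part to be ruling out the mixed configurations (for instance $v \in S_k$ while simultaneously $v \in N(T_1)$), which cannot occur because a component-interior vertex lies in $G - N[A \cup B]$ whereas $N(T_1) \subseteq N(A \cup B)$. Once the mixed cases are excluded, the concatenation is mechanical, and the three properties together show that $\eq$ is an equivalence relation.
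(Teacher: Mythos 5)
Your proof is correct and follows essentially the same route as the paper's: reflexivity and symmetry are immediate from the definition, and transitivity is handled by the same dichotomy on whether $v$ lies in $N(A \cup B)$ or inside a (then unique) connected component of $G - N[A \cup B]$, followed by concatenating the two intersecting sequences. If anything, your version is slightly more careful in stating the witnesses as $u \in N[S_1]$, $v \in N[S_k]$ and in justifying why the two cases for $v$ are mutually exclusive.
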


\begin{proof}
The relation is symmetric and reflexive by definition.
To see that it is transitive, let $u, v, w$ be three vertices of $\comp{A \cup B}$ such that $u \eq v \eq w$.
If $u = v$ or $v = w$, $u \eq w$ readily holds.
Assume $u, v, w$ are distinct.
Let $S_1, \dots, S_k$ be an intersecting sequence with $u \in S_1$ and $v \in S_k$, and let $S'_1, \dots, S'_{\ell}$ be an intersecting sequence with $v \in S'_1$ and $w \in S'_{\ell}$.
If $v \notin N(A \cup B)$, then it belongs to a unique connected component of $G - N[A \cup B]$ so that $S_k = S'_1$.
If $v \in N(A \cup B)$ then $v \in N(S_k) \cap N(S'_1)$.
We deduce that $S_1, \dots, S_k, S'_1, \dots, S'_{\ell}$ is an intersecting sequence, and $u \eq w$ follows.
\end{proof}

\begin{remark}
The definition of $\eq$ encompasses the vertices that do not belong to the closed neighborhood of any connected component of $G - N[A \cup B]$, i.e., those vertices $v$ in $N(A \cup B)$ such that $N[v] \subseteq N[A \cup B]$.
By definition of $\eq$, they are equivalent to themselves only.
\end{remark}

\begin{figure}[ht!]
\centering
\includegraphics[scale=\FIGEqClasses, page=2]{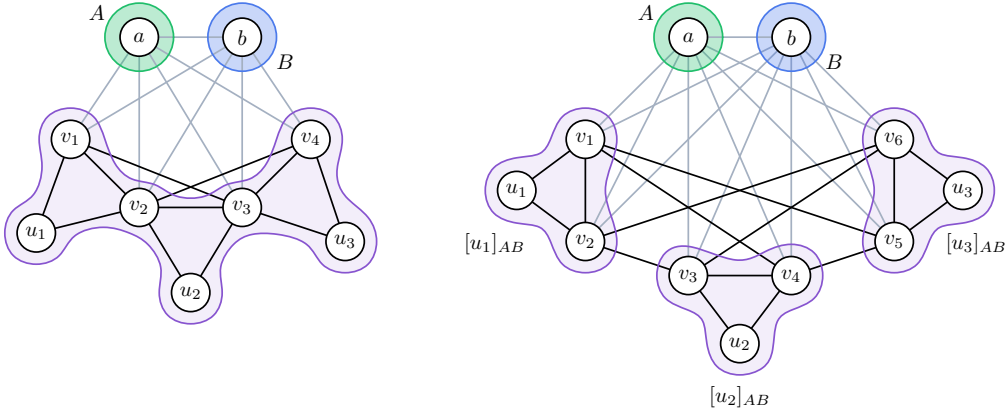}
\caption{The equivalence relation $\eq$ applied to the graphs of Figure \ref{fig:cobip-not-sufficient}.
The classes are circled (purple).
On the left, there is a unique equivalence class. Remark that, as a consequence, $v_1 v_4$ is a forbidden pair all the while $v_1 \eq v_4$.
On the right, there are three classes, $[u_1]_{AB}$, $[u_2]_{AB}$, and $[u_3]_{AB}$.}
\label{fig:eq-classes}
\end{figure}
For $u \in \comp{A \cup B}$, let $[u]_{AB}$ be the equivalence class of $u$: $[u]_{AB} = \{v \in \comp{A \cup B} \st u \eq v\}$.
In Figure \ref{fig:eq-classes}, we give the equivalence classes induced by $\eq$ on the graphs of Figure \ref{fig:cobip-not-sufficient}.
The next lemma is a direct yet important consequence of the above discussion and Lemma \ref{lem:connected-components}.

\begin{lemma}[restate=LEMEquivRel, label=lem:equivalence-relation] 
Let $G$ be a connected graph and let $A, B$ be two disjoint, linked and saturated subsets of $V(G)$.
For every bipartition $X, Y$ of $\comp{A \cup B}$, we have $\cl(A \cup X) \cap Y = \emptyset$ and $\cl(B \cup Y) \cap X = \emptyset$ only if for each $v \in \comp{A \cup B}$, either $[v]_{AB} \subseteq X$ or $[v]_{AB} \subseteq Y$.
\end{lemma}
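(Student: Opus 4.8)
The plan is to prove the implication directly: assuming that the bipartition $X, Y$ of $\comp{A \cup B}$ satisfies $\cl(A \cup X) \cap Y = \emptyset$ and $\cl(B \cup Y) \cap X = \emptyset$, I will show that every equivalence class $[v]_{AB}$ falls entirely inside $X$ or entirely inside $Y$. The whole argument is carried by Lemma \ref{lem:connected-components} and by unwinding the definition of $\eq$ through intersecting sequences of connected components of $G - N[A \cup B]$; concretely, I would re-derive the two observations (1) and (2) stated just before the definition of $\eq$ and then assemble them.

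First I would prove the single-component statement: for each connected component $S$ of $G - N[A \cup B]$, the set $N[S]$ lies in one block. Suppose not, so there are $v \in S \cap X$ and $v' \in S \cap Y$. Applying Lemma \ref{lem:connected-components} to $v$ gives $N(S) \subseteq \cl(A \cup v) \subseteq \cl(A \cup X)$, hence $N(S) \cap Y = \emptyset$ and $N(S) \subseteq X$; applying it to $v'$ gives $N(S) \subseteq \cl(B \cup v') \subseteq \cl(B \cup Y)$, hence $N(S) \subseteq Y$. Therefore $N(S) = \emptyset$, which is impossible: since $A \cup B \neq \emptyset$ and $S \subseteq \comp{N[A \cup B]}$, the component $S$ is a proper nonempty subset of the connected graph $G$ and so has a neighbor. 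Thus all of $S$ lies in one block, say $X$, and picking any $v \in S$ once more yields $N(S) \subseteq \cl(A \cup v) \subseteq \cl(A \cup X)$, disjoint from $Y$, so $N[S] = S \cup N(S) \subseteq X$.

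Next I would chain components along an intersecting sequence. If $S_1, \dots, S_k$ is such a sequence, then for each $i$ any vertex $u \in N(S_i) \cap N(S_{i+1})$ lies in $N[S_i] \cap N[S_{i+1}]$. Since each $N[S_i]$ is confined to a single block by the previous step, a shared vertex forces $N[S_i]$ and $N[S_{i+1}]$ into the \emph{same} block; an induction along the sequence then places $\bigcup_{i} N[S_i]$ entirely in $X$ or entirely in $Y$.

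Finally I would assemble these two facts. Fix $v \in \comp{A \cup B}$, say $v \in X$. If $[v]_{AB} = \{v\}$ the claim is immediate. Otherwise, for any $w \in [v]_{AB}$ with $w \neq v$ there is, by definition of $\eq$, an intersecting sequence $S_1, \dots, S_k$ to which both $v$ and $w$ belong, i.e.~$v, w \in \bigcup_{i} N[S_i]$; by the chaining step this union lies entirely in $X$ (it contains $v$), so $w \in X$. Hence $[v]_{AB} \subseteq X$, as required. I do not anticipate a real obstacle, as the lemma is essentially a repackaging of Lemma \ref{lem:connected-components}; the only step needing a little care is the nonemptiness of $N(S)$ used to reach the contradiction in the single-component step, which is exactly where the connectedness of $G$ and the nonemptiness of $A \cup B$ are invoked.
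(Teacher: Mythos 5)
Your proof is correct and follows exactly the route the paper intends: the paper states this lemma as a ``direct consequence'' of the discussion establishing observations (1) and (2) via Lemma \ref{lem:connected-components}, and your argument simply writes out those two observations and chains them, including the one genuinely necessary detail (nonemptiness of $N(S)$ from connectivity of $G$ and nonemptiness of $A \cup B$).
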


We consider $\eq$ together with $\mfs(A, B)$.
Remind that $\mfs(A, B)$ consists in pairs of vertices only, thanks to Lemma \ref{lem:MFS}.
Hence, a forbidden pair $uv \in \mfs(A, B)$ falls into exactly one of the following cases regarding equivalence classes:
\begin{enumerate}[(1)]
    \item Either $u \eq v$ so that the equivalence class $[u]_{AB}$ prevents separation of $A$ and $B$ on its own (see Proposition \ref{prop:self-conflict} below).
     This case happens for instance in the graph on the left of Figure \ref{fig:eq-classes}: $v_1 \eq v_4$ yet $v_1v_4 \in \mfs(A, B)$.
    \item Or $u \not\eq v$, so that $[u]_{AB}$ and $[v]_{AB}$ cannot be taken together in any separation of $A$ and $B$.
    For example in the graph on the right of Figure \ref{fig:eq-classes} we have $v_1 \not\eq v_3$ and $v_1v_3 \in \mfs(A, B)$, which makes $[u_1]_{AB}$ and $[u_2]_{AB}$ incompatible for separating $A$ and $B$.
    In this example, all equivalence classes are incompatible, so that $A$ and $B$ not separable.
\end{enumerate}
As for the first case, we have the direct property:

\begin{proposition}[restate=PROPSelfConflict, label=prop:self-conflict] 
If $uv \in \mfs(A, B)$ and $u \eq v$, then $A$, $B$ are not separable.
\end{proposition}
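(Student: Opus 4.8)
The plan is to argue by contradiction and to let the bulk of the work be carried by Lemma~\ref{lem:equivalence-relation}, since the statement is (as its label advertises) a direct consequence of the preceding discussion. Suppose $A$ and $B$ are separable, and fix half-spaces $H, \comp{H}$ with $A \subseteq H$ and $B \subseteq \comp{H}$. I would set $X = H \cap \comp{A \cup B}$ and $Y = \comp{H} \cap \comp{A \cup B}$; since $A \subseteq H$ and $B \subseteq \comp{H}$ are disjoint and $H, \comp{H}$ partition $V(G)$, the pair $X, Y$ is a bipartition of $\comp{A \cup B}$ with $H = A \cup X$ and $\comp{H} = B \cup Y$.

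Next I would translate the defining feature of a half-space, namely that both $H$ and $\comp{H}$ are convex, into the hypotheses of Lemma~\ref{lem:equivalence-relation}. As $H$ is convex and equal to $A \cup X$, we have $\cl(A \cup X) = H$, which is disjoint from $\comp{H} \supseteq Y$; hence $\cl(A \cup X) \cap Y = \emptyset$, and symmetrically $\cl(B \cup Y) \cap X = \emptyset$. This is exactly what Lemma~\ref{lem:equivalence-relation} requires, so it yields that for every $w \in \comp{A \cup B}$ the whole class $[w]_{AB}$ lies inside $X$ or inside $Y$. Applying this to the class $[u]_{AB} = [v]_{AB}$ (equal because $u \eq v$), I conclude that $u$ and $v$ sit on the same side: either $\{u, v\} \subseteq X$ or $\{u, v\} \subseteq Y$.

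Finally I would invoke that $uv$ is forbidden to reach the contradiction. By definition of $\mfs(A, B)$ we have $\cl(uv) \cap A \neq \emptyset$ and $\cl(uv) \cap B \neq \emptyset$. If $\{u, v\} \subseteq X \subseteq H$, then convexity of $H$ forces $\cl(uv) \subseteq H$, whence $\cl(uv) \cap B \subseteq H \cap \comp{H} = \emptyset$, contradicting $\cl(uv) \cap B \neq \emptyset$; the case $\{u, v\} \subseteq Y$ is symmetric, using $\cl(uv) \cap A \neq \emptyset$ together with $A \subseteq H$. Either way no such separation can exist, so $A$ and $B$ are not separable.

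I do not expect a genuine obstacle here: the content is essentially packaged into Lemma~\ref{lem:equivalence-relation} and into the definition of a forbidden pair. The only point requiring a little care is the bookkeeping step of extracting $\cl(A \cup X) \cap Y = \emptyset$ and $\cl(B \cup Y) \cap X = \emptyset$ from the assumption that $H, \comp{H}$ are complementary convex sets, since that is precisely where the convexity of the complement $\comp{H}$, i.e. the half-space property, enters.
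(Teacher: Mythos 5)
Your proof is correct and follows essentially the same route as the paper: invoke Lemma~\ref{lem:equivalence-relation} to force the class $[u]_{AB}=[v]_{AB}$ onto one side of any candidate separation, then use the forbidden-pair property of $uv$ to contradict convexity of that half-space. If anything, your write-up is slightly more careful than the paper's (which contains a small typo in the concluding case distinction), but the argument is the same.
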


\begin{proof}
Let $X, Y$ be a bipartition of $\comp{A \cup B}$ such that $\cl(A \cup X) \cap Y = \cl(B \cup Y) \cap X = \emptyset$, which must hold in order to find half-spaces separating $A$ and $B$.
By Lemma \ref{lem:equivalence-relation}, $[u]_{AB} \subseteq X$ or $[u]_{AB} \subseteq Y$.
since $uv \in \mfs(A, B)$ and $v \eq u$, we deduce either $\cl(X) \cap B \neq \emptyset$ or $\cl(Y) \cap B = \emptyset$.
Thus, for every bipartition $X, Y$ of $\comp{A \cup B}$, $\cl(A \cup X) \cap \cl(B \cup Y) \neq \emptyset$.
Hence, $A$ and $B$ are not separable.
\end{proof}

For the second case, we can build a graph $G_{AB}$ on the equivalence classes of $\eq$ that makes adjacent every two distinct equivalence classes sharing a forbidden pair.
More formally:
\begin{align*}
V(G_{AB}) = & \{[v]_{AB} \st v \in \comp{A \cup B}\} \\ 
E(G_{AB}) = & \{[u]_{AB} [v]_{AB} \st u \not\eq v \text{ and }uv \in \mfs(A, B) \}.
\end{align*}
For the graph on the right of Figure \ref{fig:eq-classes}, the corresponding graph $G_{AB}$ will be a clique.
Figure \ref{fig:eq-graph} illustrates the graph $G_{AB}$ on an other example.

\begin{remark}
In the case where $\comp{A \cup B} = N(A \cup B)$, the equivalence classes $[v]_{AB}$ are precisely the singletons $\{v\}$ for all $v \in N(A \cup B)$.
Identifying $[v]_{AB}$ with $v$, $G_{AB}$ turns out to be precisely the complement of $G[N(A \cup B)]$.
\end{remark}

\begin{figure}[ht!]
\centering
\includegraphics[page=1, scale=\FIGEqGraph]{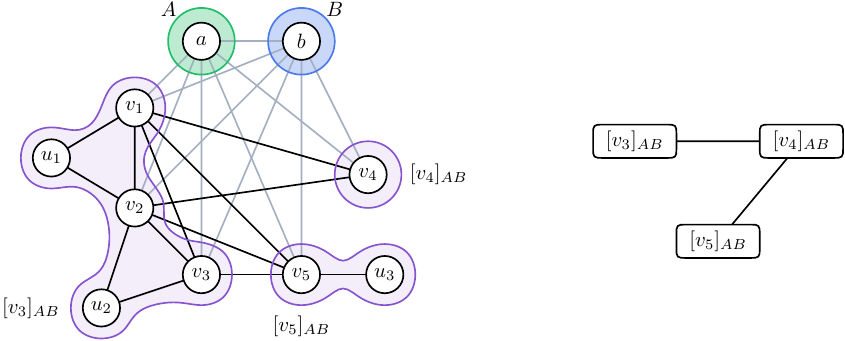}
\caption{On the left, a graph with linked $A$ and $B$ where the equivalence class are highlighted.
Again, the edges incident to $a$ and $b$ are clearer for readability.
On the right, the corresponding graph $G_{AB}$.}
\label{fig:eq-graph}
\end{figure}

Before characterizing the separability of $A$ and $B$ we give a lemma extending Lemma \ref{lem:convex-clique-1}.

\begin{lemma}[restate=LEMConvexCliqueTwo, label=lem:convex-clique-2] 
Let $G$ be a connected graph and let $A, B$ be two disjoint, linked and saturated subsets of $V(G)$ such that $N(A \cup B)$ is not a clique.
Then, for every collection $\cc{X}$ of equivalence classes of $\eq$, if $\bigcup \cc{X} \cap N(A \cup B)$ is a clique, then both $A \cup \bigcup \cc{X}$ and $B \cup \bigcup \cc{X}$ are convex.
\end{lemma}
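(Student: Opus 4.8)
The plan is to deduce convexity from the characterization of Theorem \ref{thm:dourado-convex}: writing $U = \bigcup \cc{X}$, I want to show that for every connected component $T$ of $G - (A \cup U)$ the frontier $F(A \cup U, T)$ is a clique, and symmetrically for $B \cup U$. Set $K = U \cap N(A \cup B)$, which is a clique by hypothesis. Since $A \cup U$ and $T$ are disjoint, $F(A \cup U, T) = (A \cup U) \cap N(T)$, and I would split this frontier into its $A$-part and its $U$-part and bound each one separately.

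First I would record the structure of $U$ forced by the fact that it is a union of $\eq$-classes. If a connected component $S$ of $G - N[A \cup B]$ meets $U$, then by the definition of $\eq$ any vertex of $S$ is equivalent to every vertex of $N[S] = S \cup N(S)$, since all of them belong to the length-one intersecting sequence consisting of $S$ alone; hence $N[S] \subseteq U$, and in particular $N(S) \subseteq K$. Consequently $U = K \cup \bigcup_i S_i$, where the $S_i$ are exactly the components of $G - N[A \cup B]$ contained in $U$, each satisfying $N(S_i) \subseteq K$. The crucial consequence is that every vertex of such an $S_i$ has all its neighbors inside $S_i \cup N(S_i) \subseteq U$; since $T$ is disjoint from $U$, no vertex of any $S_i$ lies in $N(T)$. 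Therefore $U \cap N(T) \subseteq K$.

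It remains to bound the $A$-part. As $T \subseteq V(G) \setminus A$, every vertex of $A \cap N(T)$ has a neighbor outside $A$, so $A \cap N(T) \subseteq F(A, \comp{A})$. Combining the two bounds gives $F(A \cup U, T) \subseteq F(A, \comp{A}) \cup K$. Because $N(A \cup B)$ is not a clique and $K \subseteq N(A \cup B)$ is a clique, Corollary \ref{cor:central-clique} yields that $F(A, \comp{A}) \cup K$ is a clique; hence so is its subset $F(A \cup U, T)$. Theorem \ref{thm:dourado-convex} then gives that $A \cup U$ is convex. The argument for $B \cup U$ is verbatim, using $F(B, \comp{B})$ and the symmetric half of Corollary \ref{cor:central-clique}.

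The main obstacle is the structural step: establishing rigorously that each touched component is swallowed whole by $U$ — together with its neighborhood in $N(A \cup B)$ — and hence that its interior vertices contribute nothing to any frontier $F(A \cup U, T)$. This is precisely what reduces the general frontier to the clique $F(A, \comp{A}) \cup K$ and lets Corollary \ref{cor:central-clique} close the argument; everything else is bookkeeping with the definitions of $F$ and $N$. It is also where the hypothesis that $\cc{X}$ is a union of $\eq$-classes is essential, as opposed to the weaker setting of Lemma \ref{lem:convex-clique-1} where $K$ is merely a clique inside $N(A \cup B)$.
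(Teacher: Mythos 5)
Your proof is correct, and it certifies convexity by a genuinely different (though closely parallel) route. The paper's proof exhibits $K' = F(A,\comp{A}) \cup (N(A\cup B)\cap C)$ as a clique \emph{separator} of $G$, checks that $C \setminus K'$ decomposes into full connected components of $G - K'$ (namely $A \setminus F(A,\comp{A})$ together with the absorbed components $S_i$), and then invokes Lemma \ref{lem:gonzales-separator}. You instead verify the frontier characterization of Theorem \ref{thm:dourado-convex} directly: for each component $T$ of $G - (A\cup \bigcup\cc{X})$ you squeeze $F(A\cup \bigcup\cc{X}, T)$ into $F(A,\comp{A}) \cup K$, using that interior vertices of the absorbed components have all their neighbors inside $\bigcup\cc{X}$ and hence never appear in such a frontier. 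The essential ingredients are the same in both arguments---Corollary \ref{cor:central-clique}, and the observation that any component of $G - N[A\cup B]$ meeting $\bigcup\cc{X}$ is swallowed whole together with its neighborhood, which is exactly where the hypothesis that $\cc{X}$ is a union of $\eq$-classes enters (correctly identified by you as the step that goes beyond Lemma \ref{lem:convex-clique-1}). What your version buys is that you never need to argue that $A\setminus K'$ and the $S_i$ are entire components of $G - K'$; you replace that with the lighter bookkeeping $A \cap N(T) \subseteq F(A,\comp{A})$ and $\bigcup\cc{X} \cap N(T) \subseteq K$. Since Lemma \ref{lem:gonzales-separator} is itself an application of Theorem \ref{thm:dourado-convex}, your argument is marginally more self-contained, while the paper's version makes the separating clique structure of the resulting half-space explicit.
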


\begin{proof}
Let $\cc{X}$ be a collection of equivalences classes such that $\bigcup \cc{X} \cap N(A \cup B)$ is a clique and let $C = A \cup \bigcup \cc{X}$.
We show that $C$ is convex.
We put $K = F(A, \bar A) \cup (N(A \cup B) \cap C)$.
Now, by assumption, $N(A \cup B) \cap C$ is a clique, $A$ and $B$ are linked and saturated and $N(A \cup B)$ is not a clique.
Therefore, $K$ is a clique by Corollary \ref{cor:central-clique}.
In view of Lemma \ref{lem:gonzales-separator}, we show that $K$ is a clique separator of $G$ and that $C \setminus K$ is a union of connected components of $G - K$.
First, since $F(A, \bar A) \subseteq K$ and $K$ is a clique, we have that $G - K$ disconnects $A \setminus K$ from $\comp{A \cup B} \setminus K$.
Hence $K$ is a clique separator of $G$ and moreover, $A \setminus K$ is indeed a union of connected components of $G - K$ since $F(A, \bar A) \subseteq K$.
Now we consider $C \setminus (K \cup A)$.
If $C \setminus (K \cup A) = \emptyset$, we deduce $C \subseteq A \cup K$ and the result holds by Lemma \ref{lem:convex-clique-1}.
Assume that $C \setminus (K \cup A) \neq \emptyset$ and let $S_1, \dots, S_k$ be the connected components of $G - N[A \cup B]$ such that $C \cap S_i \neq \emptyset$ for each $1 \leq i \leq k$.
We have $C \setminus (A \cup K) \subseteq \bigcup_{i = 1}^k S_i$.
We show that $\bigcup_{i = 1}^k S_i \subseteq C \setminus (A \cup K)$.
Let $v \in S_i$ for some $1 \leq i \leq k$.
By definition of $\eq$, $S \subseteq [v]_{AB}$ and since $\cc{X}$ is a collection of equivalence classes, we obtain $S \subseteq [v]_{AB} \subseteq C \setminus (K \cup A)$.
We deduce $C \setminus (A \cup K) \subseteq \bigcup_{i = 1}^k S_i$ and hence $C \setminus (A \cup K) = \bigcup_{i = 1}^k S_i$.
It remains to show that $S_i$ is a connected component of $G - K$.
Since $S_i$ is a connected component of $G - N[A \cup B]$, it is a connected component of $G - N(S_i)$.
Moreover, $N(S_i) \subseteq N(A \cup B)$ by construction.
Finally, again by definition of $C$ and $\eq$, $N(S_i) \subseteq C$.
Henceforth, $N(S_i) \subseteq N(A \cup B) \cap C$ from which we deduce that $S_i$ is a connected component of $G - K$.
Hence, $C = K \cup (A \setminus K) \cup (C \setminus (K \cup A))$ is the union of a clique separator $K$ of $G$ and connected components of $G - K$.
Applying Lemma \ref{lem:gonzales-separator}, we deduce that $C$ is convex, which concludes the proof. 
\end{proof}

We can characterize the separability of $A, B$ by generalizing Lemma \ref{lem:intermediate-cobip}.

\begin{theorem}[restate=THMLinkedSep, label=thm:linked-separability] 
Let $G$ be a connected graph and let $A, B$ be two disjoint, linked and saturated subsets of $V$.
Then $A$ and $B$ are separable if and only if the next conditions hold:
\begin{enumerate}[(1)]
    \item for every $v \in \comp{A \cup B}$, $[v]_{AB}$ contains no forbidden pairs; 
    \item $G_{AB}$ is bipartite.
\end{enumerate}
\end{theorem}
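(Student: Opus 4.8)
The plan is to prove both implications, relying on the fact—established in the discussion preceding Lemma \ref{lem:equivalence-relation}—that any half-space separation of $A$ and $B$ has the form $H = A \cup X$, $\comp{H} = B \cup Y$ where $X, Y$ is a bipartition of $\comp{A \cup B}$, and that, conversely, it suffices to exhibit such a bipartition for which both $A \cup X$ and $B \cup Y$ are convex (then $H = A \cup X$ is convex and $\comp{H} = V \setminus H = B \cup Y$ is convex, so they are complementary half-spaces with $A \subseteq H$, $B \subseteq \comp{H}$).

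For the only if direction, suppose $H = A \cup X$, $\comp{H} = B \cup Y$ separate $A$ and $B$. Condition (1) is immediate from Proposition \ref{prop:self-conflict}: were some class $[v]_{AB}$ to contain a forbidden pair, $A$ and $B$ would not be separable. For condition (2), since $H$ and $\comp{H}$ are convex and disjoint, we have $\cl(A \cup X) \cap Y = \emptyset$ and $\cl(B \cup Y) \cap X = \emptyset$, so Lemma \ref{lem:equivalence-relation} guarantees that each equivalence class lies entirely in $X$ or entirely in $Y$. This yields a $2$-coloring of $V(G_{AB})$, which I then check is proper: if $[u]_{AB}[v]_{AB} \in E(G_{AB})$ then $uv \in \mfs(A, B)$, and placing $u, v$ on the same side would force $\cl(uv)$—which meets both $A$ and $B$—inside a single half-space, contradicting $H \cap \comp{H} = \emptyset$. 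Hence adjacent classes receive opposite colors and $G_{AB}$ is bipartite.

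For the if direction, I first dispatch the case where $N(A \cup B)$ is a clique: there $\mfs(A, B) = \emptyset$ by Lemma \ref{lem:MFS}, conditions (1) and (2) hold vacuously, and $A$, $\comp{A} = B \cup \comp{A \cup B}$ are complementary half-spaces as already argued. When $N(A \cup B)$ is not a clique, I use condition (2) to $2$-color $G_{AB}$ and set $X$, $Y$ to be the unions of the two color classes, giving a bipartition of $\comp{A \cup B}$ into unions of equivalence classes. The crux is to show $X \cap N(A \cup B)$ is a clique (and symmetrically for $Y$): taking distinct non-adjacent $u, v$ there would give a forbidden pair $uv \in \mfs(A, B)$ by Lemma \ref{lem:MFS}; if $u \eq v$ this violates condition (1), while if $u \not\eq v$ it produces an edge of $G_{AB}$ inside one color class, violating properness of the coloring. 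With $X \cap N(A \cup B)$ and $Y \cap N(A \cup B)$ both cliques, Lemma \ref{lem:convex-clique-2} makes $A \cup X$ and $B \cup Y$ convex, and since they partition $V$ they are the desired complementary half-spaces.

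I expect the main obstacle to be the if direction, and specifically the clique argument for $X \cap N(A \cup B)$: this is the point where conditions (1) and (2) are both genuinely needed and where the reduction to Lemma \ref{lem:convex-clique-2} must be set up carefully, including the preliminary clique/non-clique dichotomy for $N(A \cup B)$ that cleanly separates the trivial case from the substantive one.
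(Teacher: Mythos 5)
Your proposal is correct and follows essentially the same route as the paper's proof: the only-if direction via Lemma \ref{lem:equivalence-relation} and the no-forbidden-pair property of each side, and the if direction via the clique/non-clique dichotomy for $N(A \cup B)$, the clique argument for $X \cap N(A \cup B)$ using Lemma \ref{lem:MFS}, and Lemma \ref{lem:convex-clique-2}. The only cosmetic difference is that you invoke Proposition \ref{prop:self-conflict} for condition (1), where the paper derives it directly from the separation.
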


\begin{proof}
We start with the only if part.
Assume $A$ and $B$ are separable and let $H, \bar{H}$ be a half-space separation of $A$ and $B$ with $A \subseteq H$ and $B \subseteq \bar H$.
Put $X = H \setminus A$ and $Y = \comp{H} \setminus B$.
By assumption, $H \cap Y = \cl(A \cup X) \cap Y = \emptyset$. 
Hence, by Lemma \ref{lem:equivalence-relation}, for each $v \in \comp{A \cup B}$, either $[v]_{AB} \subseteq X$ or $[v]_{AB} \subseteq Y$.
Let $\cc{X} = \{[v]_{AB} \in V(G_{AB}) \st [v]_{AB} \subseteq X\}$ and $\cc{Y} = \{[v]_{AB} \in V(G_{AB}) \st [v]_{AB} \subseteq Y\}$.
Since $H, \comp{H}$ are half-spaces separating $A$ and $B$, and $X \subseteq H$, $Y \subseteq \comp{H}$, we deduce that neither $X$ nor $Y$ contain a forbidden pair of $\mfs(A, B)$.
We derive:
\begin{enumerate}[(1)]
    \item for each $[v]_{AB}$, $[v]_{AB}$ contains no forbidden pair, i.e., item (1) holds;
    \item for each pair of distinct classes $[u]_{AB}, [v]_{AB}$ in $X$ (resp.~$Y$), $[u]_{AB}$ and $[v]_{AB}$ are not adjacent in $G_{AB}$, i.e., that $\cc{X}$ (resp.~$\cc{Y}$) is an independent set of $G_{AB}$.
    Since $\cc{X}$, $\cc{Y}$ is a partition of $G_{AB}$ into two independent sets, we conclude that $G_{AB}$ is bipartite, and that item (2) of the theorem holds.
\end{enumerate}

We move to the if part.
Assume both items (1) and item (2) are satisfied.
In particular, if $N(A \cup B)$ is a clique, $\mfs(A, B) = \emptyset$ by Lemma \ref{lem:MFS}.
Hence, $A \cup N(A \cup B)$ and $B$ (resp.~$B \cup N(A \cup B)$ and $A$) are half-spaces separating $A$ and $B$.
Assume $N(A \cup B)$ is not a clique and let $\cc{X}, \cc{Y}$ be any bipartition of $V(G_{AB})$ into two independent sets.
We show that $\bigcup \cc{X}$ contains no forbidden pair.
Assume for contradiction there exists a forbidden pair $uv \in \bigcup \cc{X}$. 
We have two cases:
\begin{enumerate}[(1)]
    \item $u \eq v$, but this would contradict item (1) of the statement;
    \item $u \not\eq v$, but this would contradict $\cc{X}_{A}$ being an independent set of $G_{AB}$ by definition of $G_{AB}$.
\end{enumerate}
By Lemma \ref{lem:MFS} we deduce that $\bigcup \cc{X}$ contains no forbidden pair, and hence that $\bigcup \cc{X} \cap N(A \cup B)$ is a clique.
Applying Lemma \ref{lem:convex-clique-2}, $A \cup \bigcup \cc{X}$ is convex.
The same reasoning on $B \cup \cc{Y}$ yields that $A \cup \bigcup \cc{X}$ and $B \cup \cc{Y}$ are half-spaces separating $A$ and $B$.
This concludes the proof.
\end{proof}

Figure \ref{fig:eq-graph-separability} illustrate the conditions of Theorem \ref{thm:linked-separability} on the example of Figure \ref{fig:eq-graph}.
\begin{figure}[ht!]
\centering
\includegraphics[scale=\FIGEqGraphSep, page=2]{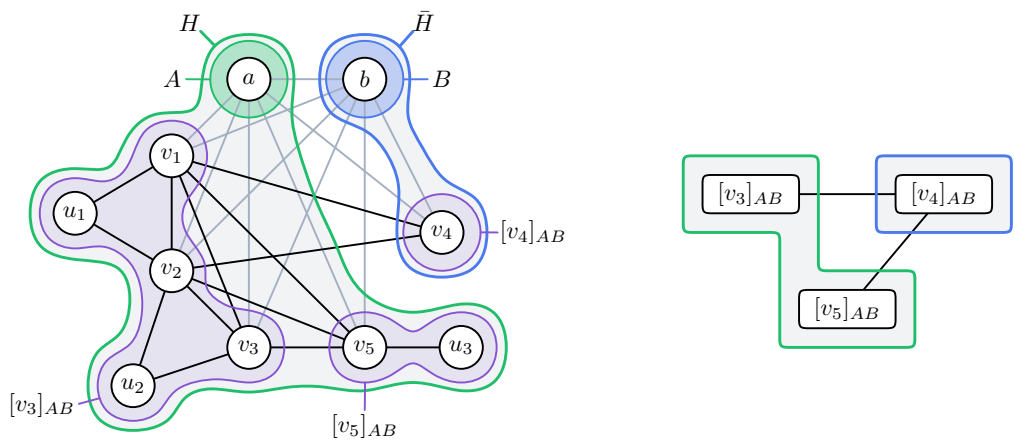}
\caption{Illustration of Theorem \ref{thm:linked-separability} on the graph of Figure \ref{fig:eq-graph}.
A half-space separation of $A$ and $B$ is drawn.
Observe that it corresponds to a bipartition of $G_{AB}$ into independent sets.}
\label{fig:eq-graph-separability}
\end{figure}
We finally argue that the conditions of Theorem \ref{thm:linked-separability} can be checked in polynomial time.
Since $\mfs(A, B)$ consists in pairs only, it can be computed in polynomial time.
Then, we identify the connected components of $G - N[A \cup B]$ in polynomial time by traversing $G - N[A \cup B]$.
We then identify the equivalence relation $\eq$ and build $G_{AB}$ accordingly.
Testing that no equivalent class contains a forbidden pair can be done in polynomial as well as checking that $G_{AB}$ is bipartite also takes polynomial time.
We deduce:

\begin{theorem}[restate=THMLinkedSepPoly, label=thm:linked-separability-poly] 
Let $G$ be a connected graph and let $A, B$ be two disjoint, linked and saturated subsets of $V$.
Whether $A, B$ can be separated by half-spaces can be checked in polynomial time in the size of $G$.
\end{theorem}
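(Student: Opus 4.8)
The plan is to turn the characterization of Theorem \ref{thm:linked-separability} into an algorithm and to argue that each of its ingredients is computable in polynomial time. Recall that, by that theorem, $A$ and $B$ are separable if and only if (1) no equivalence class of $\eq$ contains a forbidden pair of $\mfs(A, B)$, and (2) the graph $G_{AB}$ is bipartite. Hence it suffices to compute $\mfs(A, B)$, the equivalence relation $\eq$, and the graph $G_{AB}$, and then to test these two conditions.

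First I would compute $\mfs(A, B)$. By Lemma \ref{lem:MFS}, since $A$ and $B$ are saturated, every minimal forbidden set is a pair $uv \subseteq \comp{A \cup B}$, characterized by the requirement that $\cl(uv) \cap N(A \cup B)$ is not a clique. There are at most $\binom{\card{V(G)}}{2}$ such pairs, and for each one the hull $\cl(uv)$ can be computed in polynomial time by Theorem \ref{thm:dourado-poly}; testing whether its intersection with $N(A \cup B)$ is a clique is then immediate. Thus $\mfs(A, B)$ is obtained in polynomial time.

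Next I would build $\eq$ and $G_{AB}$. Computing $N[A \cup B]$ and the connected components $S_1, \dots, S_m$ of $G - N[A \cup B]$ is a plain graph traversal, hence polynomial. To realize $\eq$ on $\comp{A \cup B}$, I would form an auxiliary graph on the vertices of $\comp{A \cup B}$ in which I place a clique on each closed neighborhood $N[S_j]$; by the definition of intersecting sequences, the equivalence classes of $\eq$ are exactly the connected components of this auxiliary graph (two cliques $N[S_i]$, $N[S_j]$ meet precisely when $N(S_i) \cap N(S_j) \neq \emptyset$, and isolated vertices give the singleton classes of the remark), which is computable in polynomial time, e.g.\ by union-find. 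With the class labels in hand, $G_{AB}$ is assembled by scanning each pair $uv \in \mfs(A, B)$, locating the classes $[u]_{AB}, [v]_{AB}$, and adding the edge $[u]_{AB}[v]_{AB}$ whenever $u \not\eq v$; this too is polynomial since $\mfs(A, B)$ has polynomial size.

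Finally, checking condition (1) amounts to verifying, for each pair $uv \in \mfs(A, B)$, that $u$ and $v$ carry different class labels, a constant-time lookup per pair; and checking condition (2) is a bipartiteness test on $G_{AB}$, doable by a standard two-colouring in linear time. Combining these steps with Theorem \ref{thm:linked-separability} yields the claimed polynomial-time decision procedure. I expect the main obstacle — the genuine bottleneck of the whole routine — to be the computation of $\mfs(A, B)$, which rests on the Carathéodory number of monophonic convexity being $2$ (Theorem \ref{thm:duchet-cara}): this is exactly what keeps the minimal forbidden sets down to pairs and lets us enumerate them directly rather than over exponentially many subsets of $\comp{A \cup B}$.
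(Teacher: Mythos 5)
Your proposal is correct and follows essentially the same route as the paper: compute $\mfs(A, B)$ as a set of pairs via Lemma \ref{lem:MFS} and Theorem \ref{thm:dourado-poly}, obtain the classes of $\eq$ from the connected components of $G - N[A \cup B]$, build $G_{AB}$, and test the two conditions of Theorem \ref{thm:linked-separability}. You simply spell out the details (the auxiliary union-of-cliques graph for $\eq$, the enumeration of pairs for $\mfs(A,B)$) that the paper leaves as a brief sketch.
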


\section{Conclusion}
\label{sec:conclusion}

We proved that half-space separability can be tested in polynomial time for monophonic convexity.
Using Lemma \ref{lem:convex-clique-2}, the algorithm we propose can be adapted to generate a pair of half-spaces separating two sets of vertices, if any.
Moreover, we deduce as a corollary that the $2$-partition problem can be solved in polynomial time for monophonic convexity, thus answering an open problem in \cite{gonzalez2020covering}.

To decide separability, we used the underlying graph in conjunction with the Carathéodory number, being constant for monophonic convexity \cite{duchet1988convex}.
A natural question is then to investigate to what extent the Carathéodory number can be used to decide separability.
However, relying on the problem of $2$-coloring $3$-uniform hypergraphs \cite{lovasz1973coverings}, we can show that already with Carathéodory number 3, half-space separation in general convexity spaces is out of reach.

\begin{theorem}[restate=THMHardnessCara, label=thm:hardness-cara]
Half-space separation is $\NP$-complete for convexity spaces $(V, \cs)$ given by $V$ and a hull operator $\cl$ that computes $\cl(X)$ in polynomial time in the size of $V$ for all $X \subseteq V$, even if $(V, \cs)$ has Carathéodory number 3.
\end{theorem}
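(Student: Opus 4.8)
The plan is to establish both membership in $\NP$ and $\NP$-hardness. Membership is the routine direction: a candidate half-space is simply a set $H \subseteq V$, and one verifies in polynomial time that $\cl(H) = H$ and $\cl(\comp{H}) = \comp{H}$ (so that $H$ and $\comp{H}$ are both convex) together with $A \subseteq H$ and $B \subseteq \comp{H}$. Since $\cl$ is polynomial-time computable by hypothesis and $H$ is a polynomial-size certificate, half-space separation lies in $\NP$. The heart of the argument is therefore the hardness reduction, which I would carry out from the problem of $2$-coloring $3$-uniform hypergraphs (equivalently, deciding Property B), which is $\NP$-complete \cite{lovasz1973coverings}.

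\textbf{Construction.} Given a $3$-uniform hypergraph with vertex set $W$ and edge set $F \neq \emptyset$ (each $e \in F$ being a $3$-element subset of $W$), I would build the convexity space on $V = W \cup \{a, b\}$ with two fresh elements $a, b \notin W$, and set $A = \{a\}$, $B = \{b\}$ (non-empty and disjoint). The hull operator is defined by
\[
\cl(X) = \begin{cases} V & \text{if } e \subseteq X \text{ for some } e \in F, \\ X & \text{otherwise.}\end{cases}
\]
The first step is to check that $\cl$ is a genuine hull operator (extensive, monotone, idempotent), using that $\cl(V) = V$ precisely because $V$ contains every edge, and that it is computable in time polynomial in $\card{V}$, since testing whether $X$ contains one of the at most $\card{W}^3$ edges is clearly polynomial. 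The induced family $\cs$ is then exactly $V$ together with every set containing no full edge of $F$, and a short case analysis shows it is closed under intersection.

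\textbf{Correctness, Carathéodory number, and the obstacle.} The crux is the equivalence: $A$ and $B$ are half-space separable if and only if the hypergraph is $2$-colorable. For the forward direction, any half-spaces $H \supseteq \{a\}$ and $\comp{H} \supseteq \{b\}$ satisfy $H \neq V$ and $\comp{H} \neq V$, so convexity forces each of $H, \comp{H}$ to contain no full edge; restricting to $W$ yields a partition of $W$ with no monochromatic edge. Conversely, a proper $2$-coloring $(R, L)$ of $W$ gives the half-spaces $H = \{a\} \cup R$ and $\comp{H} = \{b\} \cup L$. I would then pin the Carathéodory number at $3$: if $v \in \cl(X)$, either $v \in X$ (take the singleton $\{v\}$) or $v \in V$ because some $e \subseteq X$ (take $Y = e$), giving an upper bound of $3$; and taking $X = e$ with $v = a \notin e$ shows no set of size $\le 2$ suffices, so the value is exactly $3$ whenever $F \neq \emptyset$. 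I expect the main obstacle to be not any single computation but ensuring the whole gadget is internally consistent: that the ``blow-up'' rule really produces an intersection-closed family containing $V$ and $\emptyset$, that the auxiliary elements $a, b$ force the two color classes to become complementary half-spaces rather than letting the trivial separation $\emptyset, V$ intervene, and that this simultaneously keeps the Carathéodory number at exactly $3$ rather than inadvertently lowering it.
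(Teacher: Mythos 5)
Your proposal is correct and follows essentially the same route as the paper: membership via the half-space certificate, then a reduction from $2$-coloring $3$-uniform hypergraphs using the groundset $V(\H) \cup \{a,b\}$ with $A = \{a\}$, $B = \{b\}$ and a hull operator that blows up any set containing a hyperedge, giving Carath\'eodory number exactly $3$. The only (immaterial) difference is that the paper's operator sends such a set $X$ to $X \cup \{a,b\}$ rather than to all of $V$; both choices yield the same separability criterion and the same Carath\'eodory number.
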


\begin{proof}
We first show the problem belongs to $\NP$.
Let $(V, \cs)$ be given by $V$ and the hull operator $\cl$, and let $A, B \subseteq V$.
A certificate is a pair of half-spaces $H, \comp{H}$ separating $A$ and $B$.
This can be tested efficiently by checking $A \subseteq \cl(H) = H$ and $B \subseteq \cl(\comp{H}) = \comp{H}$.

To show hardness, we use a reduction from $3$-uniform hypergraph $2$-colouring.
A hypergraph $\H$ is a pair $(V(\H), \E(\H))$ where $\E(\H)$ is a non-empty collection of subsets of $V(\H)$ called (hyper)edges.
The hypergraph $\H$ is $k$-uniform if all the edges have size $k$.
An independent set of $\H$ is a set $I \subseteq V(\H)$ such that $E \nsubseteq I$ for each $E \in \E(\H)$.
\begin{decproblem}
    \problemtitle{3-uniform hypergraph 2-coloring}
    \probleminput{A $3$-uniform hypergraph $\H$.}
    \problemquestion{is there a non-trivial bipartition of $V(\H)$ into independent sets ?} 
\end{decproblem}
The problem has been shown $\NP$-complete by Lovàsz \cite{lovasz1973coverings}.

We build a reduction towards half-space separation.
Let $\H = (V(\H), \E(\H))$ be a $3$-uniform hypergraph.
Let $V' = V(\H) \cup \{a, b\}$, $A = \{a\}$, $B = \{b\}$.
We consider the convexity space $(V', \cs)$ with hull operator $h$ defined for $X \subseteq V'$ as follows:
\[ 
h(X) = \begin{cases}
    X  \cup \{a, b\}& \text{ if } X \text{ includes an edge of } \E(\H) \\
    X & \text{otherwise.}
\end{cases}
\]
Since $\H$ is $3$-uniform, $\E(\H)$ has size at most $\card{V(\H)}^3$ and $\cl$ can be computed in polynomial time in the size of $V$ by scanning hyperedges.
We show that $(V', \cs)$ has Carathéodory number $3$.
Let $X \subseteq V(\H)$ and $v \in V(\H)$ such that $v \in \cl(X)$.
By definition of $\cl$, if $v \in V(\H)$, then $v \in X$ must hold.
On the other hand, if $v = a$ or $v = b$ and $v \notin X$, then $v \in \cl(X)$ if and only if $X$ contains en edge of $\E(\H)$.
Since edges of $\H$ have size $3$, we deduce that for each $X$, $v$ such that $v \in \cl(X)$, the least $d$ such that $v \in \cl(Y)$ with $Y \subseteq X$ and $\card{Y} \leq d$ is $3$.
This proves that the Carathéodory number of $(V', \cs)$ is $3$ as expected.

Now, we readily have by definition of $\cl$ that $A$ and $B$ are separable if and only if there exists a bipartition $I$, $J$ of $V' \setminus \{a, b\}$ such that $I, J$ are independent sets of $\H$.
Henceforth, an algorithm that runs in polynomial time in the size of $V$ and that uses only a polynomial number of calls to $\cl$ will solve the input instance of $3$-uniform hypergraph $2$-coloring in polynomial time.
This concludes the proof. 
\end{proof}

Theorem \ref{thm:hardness-cara} together with Theorem \ref{thm:separability-polynomial} motivate the next intriguing open problem.

\begin{problem}
Find a natural (graph) convexity with Carathéodory number $2$ (e.g., triangle-path convexity \cite{changat1999triangle}) where half-space separation is hard, or show that for all such convexities, half-space separation is tractable.
\end{problem}

\userpar{Funding} The first two authors have been funded by the CMEP 
Tassili project: "Argumentation et jeux: Structures et Algorithmes", Codes: 46085QH-21MDU320 PHC, 2021-2023.
This research is also supported by the French government IDEXISITE initiative 
16-IDEX-0001 (CAP 20-25).

\userpar{Acknowledgements} We thank anonymous reviewers for their comments. We are also grateful to Victor Chepoi for suggesting the term ``shadow'' instead of ``extension'' as well as pointing us to further references, especially the recent works on monophonic convexity \cite{bressan2024efficient, chepoi2024separation}.

\bibliographystyle{alpha}
\bibliography{biblio}	

\end{document}